
\documentclass[12pt]{amsart}
\textwidth=16cm
\hoffset=-2cm

\usepackage{amsmath}
\usepackage{amsfonts,amssymb}
\usepackage{amsthm}
\usepackage[matrix,arrow,curve]{xy}
\numberwithin{equation}{section}
\theoremstyle{plain}
\newtheorem{theorem}{Theorem}[section]
\newtheorem{lemma}[theorem]{Lemma}
\newtheorem{predl}[theorem]{Proposition}
\newtheorem{corollary}[theorem]{Corollary}
\newtheorem{conjecture}[theorem]{Conjecture}
\theoremstyle{definition}
\newtheorem{definition}[theorem]{Definition}
\newtheorem{remark}[theorem]{Remark}


\newcommand{\C}{\mathbb C}
\newcommand{\N}{\mathbb N}
\newcommand{\Z}{\mathbb Z}
\newcommand{\Q}{\mathbb Q}

\newcommand{\A}{\mathbb A}
\renewcommand{\P}{\mathbb P}
\renewcommand{\AA}{\mathcal A}

\newcommand{\D}{\mathcal D}

\newcommand{\EE}{\mathcal E}
\newcommand{\FF}{\mathcal F}

\newcommand{\LL}{\mathcal L}

\renewcommand{\O}{\mathcal O}

\renewcommand{\k}{\mathsf k}

\newcommand{\modd}{\mathrm{mod{-}}}

\newcommand{\coh}{\mathrm{coh}}

\newcommand{\Perf}{\mathrm{Perf}}

\newcommand{\ch}{\mathrm{ch}}

\newcommand{\xra}{\xrightarrow}
\renewcommand{\le}{\leqslant}
\renewcommand{\ge}{\geqslant}

\newcommand{\bul}{\bullet}

\DeclareMathOperator{\Hom}{\textup{Hom}}

\DeclareMathOperator{\Ext}{\textup{Ext}}

\DeclareMathOperator{\Pic}{\mathrm{Pic}}
\DeclareMathOperator{\End}{\mathrm{End}}

\DeclareMathOperator{\rank}{\mathrm{rank}}

\usepackage{array}

\begin{document}

\title[On exceptional collections on del Pezzo surfaces]{On full exceptional collections of line bundles on del Pezzo surfaces}

\author{Alexey ELAGIN}
\thanks{The research was carried out at the IITP RAS at the expense of the Russian
Foundation for Sciences (project ¹ 14-50-00150).}
\address{Institute for Information Transmission Problems (Kharkevich Institute), Moscow, RUSSIA\\
National Research University Higher School of Economics, Moscow, RUSSIA}
\email{alexelagin@rambler.ru}

\author{Valery LUNTS}
\address{Indiana University, Bloomington, USA}
\email{vlunts@indiana.edu}

\begin{abstract}We prove that any numerically exceptional collection of maximal length, consisting of line bundles,  on a smooth del Pezzo surface is a standard augmentation in the sense of L.\,Hille and M.\,Perling. 
We deduce that any such collection is exceptional and full.
\end{abstract}

\maketitle

\tableofcontents

\section{Introduction}

In this work we study exceptional collections of line bundles on surfaces. 
Let~$X$ be a smooth projective variety over a field~$\k$.
Recall that objects $\EE_1,...,\EE_n$ in
the derived category $\D^b(\coh (X))$ form a {\it full exceptional collection} if
\begin{enumerate}
\item $\Hom  (\EE_i,\EE_i[s])=\k$ if $s=0,$ and is zero otherwise;

\item $\Hom (\EE_i,\EE_j[s])=0$ for all $s$ if $j<i$;

\item $\D^b(\coh (X))$ is the smallest full strict triangulated subcategory of $\D^b(\coh (X))$
that contains $\EE_1,...,\EE_n$.
\end{enumerate}

An exceptional collection $(\EE_1,...,\EE_n)$
is {\it strong} if in addition one has

\begin{enumerate}
\item[(2')]  $\Hom (\EE_i,\EE_j[s])=0$ for $s\ne 0$ and all $i,j$.
\end{enumerate}

Having a full exceptional collection in the derived category of coherent sheaves is a nice (but rare) property of an algebraic  variety $X$. It allows one to ``express'' any sheaf (or object of the derived category) on $X$ via the objects of the exceptional collection. For instance, sheaves $\O_{\P^2},\O_{\P^2}(1),\O_{\P^2}(2)$ form a full exceptional collection in $\D^b(\coh(\P^2))$. It follows that for any coherent sheaf $\FF$ on $\P^2$ there exists a bounded complex whose terms are direct sums of $\O_{\P^2}, \O_{\P^2}(1)$ and $\O_{\P^2}(2)$ and whose only nontrivial cohomology is $\FF$.
Varieties with a full exceptional collection of $n$ objects in derived category obey various cohomological restrictions. For instance, their Hodge numbers are located on the diagonal, their Hochschild homology is trivial: $HH_0=\k^n$, $HH_i=0$ otherwise, their Grothendieck group $K_0$ is a lattice generated by classes of exceptional objects. 
Given a full exceptional collection $\EE_1,...,\EE_n$, one can construct a differential graded (or DG)
algebra~$\AA$ with cohomology $\End^*(\oplus_i \EE_i)$. 
By a theorem of B.\,Keller, 
there is an equivalence of categories 
\begin{equation*}
\D^b(\coh (X))\cong \Perf(\AA)
\end{equation*}
where $\Perf(\AA)$ is the homotopy category of right perfect $\AA$-DG-modules. 
This equivalence is especially valuable if the collection  $\EE_1,...,\EE_n$ is strong. Under this assumption one can take the algebra $\AA$ to be an ordinary finite-dimensional associative algebra $\End(\oplus_i \EE_i)$, it is a path algebra of some ordered quiver. The equivalence $\D^b(\coh (X))\cong \D^b(\modd \AA)$ in this case 
provides a connection between the geometry of $X$ and the representation theory of $\AA$. This equivalence introduces a non-standard T-structure on the category $\D^b(\coh(X))$ thus making the use of tilting theory possible. 

Hence one would like to know 
which varieties possess full exceptional collections. 
Among such varieties are projective spaces (A.\,Beilinson, \cite{Be}), Grassmann varieties and quadrics (M.\,Kapranov, \cite{Ka}), many other homogeneous spaces (M.\,Kapranov, \cite{Ka}; A.\,Kuznetsov, A.\,Polishchuk and A.\,Samokhin, \cite{Ku}, \cite{KP}, \cite{PS}, \cite{Sa}),
del Pezzo surfaces (D.\,Orlov, \cite{Or}), toric Fano 3-folds (H.\,Uehara, \cite{Ue}), some other Fano 3-folds (D.\,Orlov, \cite{Or2}; A.\,Kuznetsov, \cite{Ku2}).
Full exceptional collections on the above varieties consist of vector bundles. Also, Yu.\,Kawamata proved that any toric variety has a full exceptional collection, \cite{Kaw}. 

Full strong exceptional collections of vector bundles have been constructed on projective spaces, quadrics, Grassmann varieties, del Pezzo surfaces, toric Fano 3-folds.
It was conjectured by A.\,King (see \cite{K1}) that every smooth toric variety has a full strong exceptional collection of line bundles. In \cite{HP1} L.\,Hille and M.\,Perling described a smooth toric surface that does not have such a collection (hence producing a counter example). 

It is believed that any variety with a full exceptional collection of objects in the bounded derived category is rational. The converse is not true. For example, let $X$ be a blow-up of $\P^3$ in a smooth curve $C$ of positive genus. Then $\D^b(\coh(X))$ has a semiorthogonal component equivalent to $\D^b(\coh(C))$ and hence $\D^b(\coh(X))$ cannot be generated by an exceptional collection.

Let us recall an old conjecture of the second author. 

\begin{definition} We define \emph{Tate} (or \emph{strongly rational}) varieties over $\k$ by induction on the dimension. First we declare an empty variety to be Tate.  Suppose that we have defined Tate varieties of dimension $<d$. Then we say that a variety $Y$ of dimension $d$ is Tate if it has a finite decomposition into
locally closed subvarieties which are either of the following
\begin{enumerate}
\item Tate varieties of dimension $<d$,
\item open subsets $U\subset \A^d$ such that the complement $\A^d\backslash U$ is
Tate.
\end{enumerate}
\end{definition}

Examples of Tate varieties include toric varieties and (partial) flag varieties.
A smooth projective curve is Tate if and only if it is rational.
Classification of surfaces implies that the same is true for surfaces: a smooth projective surface is Tate if and only if it is rational.

\begin{conjecture}
\label{conj_Tate} 
For a smooth projective variety $X$ the category  $\D^b(\coh (X))$ has a full exceptional collection if and only if $X$ is a Tate  variety.
\end{conjecture}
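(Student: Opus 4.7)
The plan is to treat the two directions of the conjecture separately. For the ``Tate implies full exceptional collection'' direction, I would induct on the dimension~$d$. The base case $d=0$ is trivial: a smooth projective Tate variety of dimension~$0$ is a finite set of points, whose structure sheaves form a full exceptional collection. For the inductive step the real task is to upgrade the set-theoretic stratification supplied by the definition of a Tate variety to a geometric decomposition of~$X$ that induces a semiorthogonal decomposition of $\D^b(\coh(X))$. Concretely, one would try to realise any smooth projective Tate variety of dimension~$d$ as obtained from smaller smooth projective Tate varieties by a sequence of operations compatible with exceptional collections---projective bundles, blow-ups in smooth Tate centres, and products---and then assemble the collection using Orlov's blow-up and projective bundle formulas. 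A key subtlety is that the definition allows locally closed pieces that are only open subsets of~$\A^d$, rather than~$\A^d$ itself, so the stratification must be rigidified before it can be converted into categorical input.

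For the ``full exceptional collection implies Tate'' direction, the first step is to extract enough cohomological information from a full exceptional collection of length~$n$ to constrain~$X$ motivically. The introduction already records that $HH_\bullet(X)$ is concentrated in degree~$0$ with total dimension~$n$, that $K_0(X)\cong \Z^n$, and that the Hodge numbers lie on the diagonal; with additional work one should be able to show that the integral Chow motive of~$X$ decomposes as a direct sum of $n$ Tate motives~$\Z(i)$. The main obstacle is then to promote this motivic Tate property into the geometric Tate property demanded by the definition, that is, to produce an actual stratification of~$X$ of the prescribed inductive shape from purely motivic input.

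This last step is where I expect the real difficulty to lie: it is intimately connected to longstanding open problems about the cellularity of varieties with decomposed Chow motives, and no general technique is currently available for passing from a motivic decomposition to a geometric cellular stratification. A realistic first attack would therefore verify the conjecture class by class in settings where both sides are independently well understood---smooth projective rational surfaces via the classification of surfaces, smooth toric varieties via Kawamata's theorem combined with a direct check of the Tate condition, and various homogeneous and Fano examples---both as evidence for the conjecture and as a source of techniques that might eventually globalise. The present paper's result, that any numerically exceptional collection of maximal length of line bundles on a del~Pezzo surface is in fact full, fits naturally into this program as a confirmation of the ``Tate implies collections'' direction in the smallest nontrivial two-dimensional case.
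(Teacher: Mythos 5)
This statement is a \emph{conjecture}, not a theorem: the paper offers no proof of it, explicitly records that only fragments are known (the case of curves, the ``if'' direction for surfaces, and the ``only if'' direction for surfaces under the extra hypotheses of Brown--Shipman), and states plainly that beyond dimension two the conjecture ``looks very hard in either direction.'' Your proposal is accordingly not a proof but a research program, and you are candid about this; still, to be concrete about where it fails as a proof: in the ``Tate $\Rightarrow$ collection'' direction, the inductive step is precisely the open problem. The definition of a Tate variety only supplies a set-theoretic stratification by locally closed pieces, some of which are merely open subsets of $\A^d$ with Tate complement, and there is no known procedure for upgrading such a stratification to a semiorthogonal decomposition of $\D^b(\coh(X))$; your suggestion to ``realise any smooth projective Tate variety as obtained from smaller ones by projective bundles, blow-ups and products'' is not something the definition guarantees, and most Tate varieties (already many toric varieties) do not arise this way. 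In the ``collection $\Rightarrow$ Tate'' direction, even the intermediate claim that the integral Chow motive decomposes into Tate motives requires argument (rationally it follows from work on noncommutative motives, but integrally it is delicate), and the final passage from a motivic Tate decomposition to an actual cellular stratification is a longstanding open problem with, as you yourself note, no available technique. A proposal whose two key steps are each acknowledged to be open cannot be accepted as a proof.

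One smaller point of calibration: the paper's main theorem is not best viewed as evidence for the ``Tate implies collections'' direction, which for rational surfaces is already easy (every rational surface carries a full exceptional collection of line bundles via standard augmentations). The content of the paper is rather a classification statement --- that on del Pezzo surfaces \emph{every} numerically exceptional collection of line bundles of maximal length is a standard augmentation, hence full and exceptional --- which is closer in spirit to the transitivity results of Kuleshov--Orlov than to either direction of Conjecture~\ref{conj_Tate}.
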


The above conjecture is easy to prove for curves. The ``if'' direction of
the conjecture for surfaces is also easy: moreover, on every rational surface
there exists a full exceptional collection of line bundles. The ``only if''
direction for surfaces is only known if one assumes that the
exceptional collection consists of line bundles and is in addition strong
(M.\,Brown and I.\,Shipman, \cite{BS}). Beyond dimension two the conjecture looks very hard in either direction.

Besides classification of varieties with a full exceptional collection in the derived category, one can try to classify all full exceptional collections on a given variety. The most remarkable (and, unfortunately, the only substantial known to the authors) result in this direction is a theorem by S.\,Kuleshov and D.\,Orlov~\cite{KO}. It claims that  
any full exceptional collection on a del Pezzo surface can be obtained from any other one by a sequence of mutations. In other words, the action of the braid group on the set of full exceptional collections, given by mutations,  is transitive. The similar result was obtained for three-block full exceptional collections on del Pezzo surfaces by B.\,Karpov and D.\,Nogin in~\cite{KN}: the action of the braid group of three strands on the set of three-block exceptional collections on a del Pezzo surface is transitive.

In this work we also deal with classification of full exceptional collections in the derived category of a fixed variety, but in a quite special setting.
We study exceptional collections of line bundles on surfaces. 
The paper \cite{HP2} by Hille and Perling contains the first systematic study of 
full exceptional collections of line bundles on surfaces. In particular, the authors of loc.\,cit.\,introduce the notion of a \emph{standard augmentation} of an exceptional collection, which we recall next.

Let $Y$ be a smooth surface and let $p\colon X \to Y$ be a blowup of a point $P\in Y$ with the corresponding $(-1)$-curve $E\subset X$. Let 
\begin{equation}\label{below} (\O _Y(D_1),...,\O _Y(D_n))
\end{equation}
be a collection of line bundles on $Y$. For some $1\le i\le n$ consider the 
collection 
\begin{equation}\label{stand-augmen}
(\O _X(p ^*D_1+E),...,\O _X(p ^*D_{i-1}+E),\O _X(p ^*D_i),\O _X (p^*D_i+E),
\O _X(p ^*D_{i+1}),...,\O _X(p ^*D_n))\end{equation} 
The collection (\ref{stand-augmen}) 
is called an \emph{augmentation} of the collection
(\ref{below}). If collection (\ref{stand-augmen}) is a full exceptional collection on $X$, then so is collection
(\ref{below}), and vice versa. A collection is called a \emph{standard augmentation} if it is obtained by a series of augmentations from a full exceptional collection on $\P^2$ or a Hirzebruch surface. 

Standard augmentations give us many examples of full exceptional collections of line bundles. 
It is not known whether every full exceptional collection of line bundles on a surface  is a standard augmentation. By \cite[Theorem 8.1]{HP2} this is so for strong collections on toric surfaces. By \cite[Main Theorem 3]{HI}, this is so for exceptional collections (not necessarily strong) on  toric surfaces of Picard rank $3$ or $4$. 
In this paper we prove that this is 
true for exceptional collections on del Pezzo surfaces: 
\begin{theorem}
Any full exceptional collection of line bundles on a smooth del Pezzo surface is a standard augmentation. 
\end{theorem}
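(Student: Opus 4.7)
The plan is to establish the stronger statement promised in the abstract — every numerically exceptional collection of maximal length consisting of line bundles on a smooth del Pezzo surface $X$ is a standard augmentation — from which the theorem follows, since a standard augmentation is automatically exceptional and full.

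The argument proceeds by induction on the Picard rank $\rho(X)$. For the base cases $\rho(X)=1$ (so $X=\P^2$) and $\rho(X)=2$ (so $X$ is $\P^1\times\P^1$ or the Hirzebruch surface $\mathbb{F}_1$), one classifies numerically exceptional collections of line bundles directly: using Riemann--Roch, $\chi(L_i,L_j)=\chi(\O_X(D))=1+\tfrac12 D\cdot(D-K_X)$ with $D=L_j-L_i$, so the numerical conditions become Diophantine equations on $\Pic(X)$ whose solutions can be enumerated and shown to coincide, up to twist, with the standard Beilinson/Orlov collections — these are themselves standard augmentations by fiat.

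For the inductive step, suppose $\rho(X)\ge 3$; then $X$ admits a blowdown $p\colon X \to Y$ contracting some $(-1)$-curve $E\subset X$ onto a point of a del Pezzo surface $Y$ of smaller Picard rank. Given a numerically exceptional collection $(L_1,\ldots,L_{n+1})$ on $X$, decompose each $L_k = p^*M_k + a_k E$ with $M_k\in\Pic(Y)$ and $a_k\in\Z$. The heart of the argument is to exhibit a blowdown $p$ such that the sequence $(a_k)$ takes the pattern $(1,\ldots,1,0,1,0,\ldots,0)$, with the distinguished pair at consecutive positions $(i,i+1)$, and moreover $M_i = M_{i+1}$. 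Once this is verified, $(L_1,\ldots,L_{n+1})$ is by definition an augmentation of $(M_1,\ldots,M_i,M_{i+2},\ldots,M_{n+1})$; the latter is checked to be numerically exceptional of maximal length on $Y$ (using the identity $\chi_X(p^*M+aE, p^*M'+a'E)=\chi_Y(M,M')-\binom{a'-a+1}{2}$), and the induction hypothesis closes the argument.

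The main obstacle is locating this \emph{contractible pair}. To attack it, I would pass to the Hille--Perling toric system $A_i := c_1(L_{i+1})-c_1(L_i)$, extended cyclically by $A_{n+1}:=-K_X-\sum_{i\le n} A_i$. Computing $\chi(L_j,L_i)$ via Riemann--Roch, the numerical exceptionality conditions translate into the intersection identities $A_i^2+A_i\cdot K_X=-2$, $A_i\cdot A_{i+1}=1$, and cyclically $A_i\cdot A_j=0$ for $|i-j|\ge 2$. Using the lattice structure of $\Pic(X)$ for del Pezzo surfaces and the classification of classes with fixed intersection data with $K_X$, one argues that at least one $A_i$ must be effective and representable by an actual $(-1)$-curve $E$ on $X$; here ampleness of $-K_X$ is essential. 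Taking $p$ to contract this $E$, one has $L_{i+1}=L_i(E)$ at the appropriate position, and it remains to propagate the $E$-coefficients $a_k$ across the remainder of the collection to force the required pattern. This propagation I expect to follow from a case-by-case analysis of the $\chi$-vanishing conditions on small index windows, combined with the fact that once a single $(-1)$-curve has been identified as some $A_i$, the toric-system identities sharply constrain the positions of $E$ in the neighboring $A_j$'s. Degenerate configurations in which no suitable $(-1)$-curve appears among the $A_i$ are absorbed into the base of the induction.
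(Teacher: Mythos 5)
Your overall architecture coincides with the paper's: induct on the Picard rank, settle $\P^2$ and the Hirzebruch surfaces directly, and in the inductive step locate a $(-1)$-curve $E$ among the toric-system classes $A_i$, contract it, and recognize the collection as an augmentation. However, there is a genuine gap at exactly the crucial point, namely your assertion that ``one argues that at least one $A_i$ must be effective and representable by an actual $(-1)$-curve.'' The toric-system identities only give $\sum_i A_i^2 = K_X^2 - 2n = 3(K_X^2-8) < 0$ for $n\ge 5$, so some $A_i^2$ is negative; nothing in the lattice data you list forces some $A_i^2=-1$ rather than, say, a cyclic self-intersection sequence such as $(0,0,1,-2,-2)$ on a degree-$7$ del Pezzo, which satisfies the sum constraint and all your quoted identities involve only intersection numbers. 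The paper closes this hole with a nontrivial external input (Theorem~\ref{theorem_HP}, due to Hille--Perling and Vial): for a toric system of \emph{maximal} length the sequence $(A_i^2)$ is realized by the torus-invariant boundary divisors of a smooth complete toric surface, and the minimal model program for toric surfaces then produces an index $i$ with $A_i^2=-1$. Only after that does the del Pezzo condition enter, via a Manin-type lemma (Lemma~\ref{lemma_D-1}) showing that a numerically left-orthogonal class with square $-1$ on a del Pezzo surface is linearly equivalent to an honest $(-1)$-curve; you correctly anticipate this second half, but the first half is missing and is not a routine lattice computation.

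Two smaller remarks. First, your ``propagation'' of the $E$-coefficients $a_k$ by case analysis is unnecessary: once some $A_m$ equals the $(-1)$-curve $E$, the identities $A_j\cdot E=0$ (resp.\ $=1$ for $j=m\pm1$) immediately show that $A_{m-1}+E$, $A_{m+1}+E$ and the remaining $A_j$ are pullbacks from the blow-down, so the toric system is an augmentation on the nose (Proposition~\ref{prop_eaugm}); there are no ``degenerate configurations'' to absorb into the base case, and deferring them there would in any case be circular since the base cases are the minimal surfaces. Second, since the theorem is stated over an arbitrary field, your base cases $\rho=1,2$ require an argument that $X$ is actually $\P^2$, $F_0$ or $F_1$ over $\k$ (ruling out, e.g., Severi--Brauer surfaces); the paper does this using Vial's result that the existence of a maximal-length numerically exceptional collection forces $\Pic X\to\Pic\bar X$ to be an isomorphism. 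This is a fixable omission, but the missing identification of a $(-1)$-class among the $A_i$ is the heart of the proof and your proposal does not contain it or a substitute for it.
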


Note that we do not suppose that the base field $\k$ is algebraically closed.
Moreover, the same result holds for collections that are exceptional only on numerical level. We recall that a collection of objects $\EE_1,...,\EE_n$ in the derived category 
$\D^b(\coh (X))$ is {\it numerically exceptional} if
\begin{enumerate}
\item $\chi(\EE_i,\EE_i)=1$;

\item $\chi(\EE_i,\EE_j)=0$ for all $i>j$,
\end{enumerate}
where $\chi$ is the Euler pairing: $\chi(\EE_i,\EE_j)=\sum_s (-1)^s\dim\Hom(\EE_i,\EE_j[s])$.
This collection has \emph{maximal length} if the classes $[\EE_i]$ in $K_0(X)$ generate 
$K_0(X)$ modulo numerical equivalence. 

In general, finding a numerically exceptional collection of maximal length in $\D^b(\coh(X))$ is a much easier task then finding a full exceptional collection. For example, there is a classification of smooth projective complex surfaces with $h^1(\O_X)=h^2(\O_X)=0$ admitting a  numerically exceptional collection of maximal length given by C.\,Vial in~\cite{Vi}. It says that a surface $X$ satisfying $h^1(\O_X)=h^2(\O_X)=0$ has a numerically exceptional collection of maximal length if and only if $X$
has such collection of line bundles and if and only if $X$ is one of the following: non-minimal surface, rational surface, surface of general type, Dolgachev surface of types $X_9(2,3),
X_9(2,4), X_9(3,3),X_9(2,2,2)$. Some surfaces of general type (like classical Godeaux surface, see~\cite{BBS}, or Barlow surface, see~\cite{BBKS})
have an exceptional collection of maximal length consisting of line bundles, which is not full. Also, a classical Godeaux surface gives an example of a surface which has 
an exceptional collection of maximal length but has no full exceptional collections.

We prove the following
\begin{theorem}[see Corollary~\ref{corollary_main}]
Let $X$ be a smooth del Pezzo surface. Then any numerically exceptional collection of line bundles having maximal length is a standard augmentation. This collection is exceptional and full.
\end{theorem}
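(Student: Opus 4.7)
The plan is to induct on the Picard rank $\rho(X)$ of the del Pezzo surface. The base cases are $X=\P^2$ and $X=\P^1\times\P^1$; every other smooth del Pezzo surface is obtained from one of these by blowing up a point. For the base cases, I would enumerate numerically exceptional collections of line bundles of maximal length directly: after twisting so that $\mathcal L_1=\O_X$, the identities $\chi(\O_X(D))=1+\tfrac12 D\cdot(D-K_X)$ and $\chi(\mathcal L_i,\mathcal L_j)=\chi(\O_X(D_j-D_i))$ turn both defining conditions into a small finite system of Diophantine intersection equations. Solving this reproduces, up to twist, the Beilinson collection on $\P^2$ and the standard collection on $\P^1\times\P^1$; both are full exceptional, hence trivially standard augmentations.

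For the inductive step I would follow Hille--Perling and repackage the data as the sequence of adjacent differences $A_i=D_{i+1}-D_i$ for $i=1,\dots,n-1$, completed cyclically by $A_n:=-K_X-\sum_{i<n}A_i$. The numerical exceptionality conditions, combined with maximality, translate via Riemann--Roch into $A_i^2+A_i\cdot K_X=-2$ together with analogous vanishings $\chi(\O_X(-A_i-\cdots-A_{j-1}))=0$ for $j>i+1$, i.e.\ the toric-system relations. The key step in the induction is then to extract from these combinatorial relations a single index $i$ for which $A_i$ is the class of an actual $(-1)$-curve $E\subset X$.

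The main obstacle is precisely this extraction. My approach is to first locate some $A_i$ with the \emph{numerical} signature of a $(-1)$-curve, namely $A_i^2=A_i\cdot K_X=-1$. This is a lattice-theoretic statement about the numerical Picard group of the del Pezzo surface, which carries a distinguished root-system structure (of type $E_r$ for the blowup of $\P^2$ in $r\ge3$ points). In this framework the $(-1)$-classes form a well-understood Weyl group orbit, and the toric-system constraints, combined with maximality (which forces the $A_i$ to span the lattice), should force one $A_i$ to hit this orbit. The ``numerical to effective'' upgrade is then automatic on a del Pezzo surface, where every $(-1)$-class is represented by a unique $(-1)$-curve (by Riemann--Roch and $-K_X$-ampleness, which kill $h^2$).

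Once $A_i=[E]$ is identified, let $p\colon X\to Y$ contract $E$. Inverting the augmentation construction~\eqref{stand-augmen} exhibits the original collection as a standard augmentation of the collection $\O_Y(D_1-E),\dots,\O_Y(D_{i-1}-E),\O_Y(D_i),\O_Y(D_{i+2}),\dots,\O_Y(D_n)$ on $Y$; each divisor is checked to pull back from $\Pic(Y)$ by computing its intersection number with $E$. The descended collection is numerically exceptional of maximal length on the del Pezzo surface $Y$ of strictly smaller Picard rank, so the induction hypothesis applies. The conclusion that the original collection is exceptional and full then follows automatically from the remark in the introduction, immediately after~\eqref{stand-augmen}, that augmentation preserves full exceptionality, together with the fact that the base-case collections are full exceptional by Beilinson and Orlov.
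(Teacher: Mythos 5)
Your overall strategy (induct, locate a $(-1)$-class among the differences $A_i$, contract it and descend) is the same as the paper's, but there are two genuine gaps. First, your base cases are wrong: you take only $\P^2$ and $\P^1\times\P^1$ and propose to handle $F_1$ (the blow-up of $\P^2$ at a point, a del Pezzo of degree $8$) through the inductive step. That step fails on $F_1$. By Proposition~\ref{prop_tsHirzebruch}, every toric system of length four on $F_1$ is, up to cyclic shift, $F,\,S+aF,\,F,\,S+bF$ with $a+b=-1$; the self-intersections are $0,\,1+2a,\,0,\,1+2b$, so unless $a=-1$ or $b=-1$ \emph{no} $A_i$ is a $(-1)$-class and the collection cannot be exhibited as an augmentation from $\P^2$. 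This is exactly why the definition of standard augmentation allows Hirzebruch surfaces as starting points and why the paper treats $n=3$ and $n=4$ (i.e.\ Picard rank $1$ and $2$, hence $\P^2$, $F_0$ or $F_1$) as base cases, verified by the explicit classification in Proposition~\ref{prop_tsHirzebruch} and Corollary~\ref{corollary_hirzebruch}. Your plan, as stated, has no way to conclude for a generic maximal-length collection on $F_1$.

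Second, the step you yourself flag as the main obstacle --- that for Picard rank $\ge 3$ the toric-system relations force some $A_i$ to have $A_i^2=-1$ --- is only asserted (``should force one $A_i$ to hit this orbit''), not proved, and it is the real content of the argument. The paper does not argue via the $E_r$ root system at all; it invokes Theorem~\ref{theorem_HP} (Hille--Perling, Vial): any maximal-length toric system on a surface with $\chi(\O_X)=1$ has the same self-intersection sequence $A_1^2,\ldots,A_n^2$ as the torus-invariant boundary divisors of some smooth complete toric surface $Y$. For $n\ge 5$, $Y$ is non-minimal, its $(-1)$-curve must be torus-invariant, hence some $A_i^2=-1$. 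Without this (or a worked-out substitute), your induction has no engine. The remaining ingredients of your sketch --- the upgrade from a numerical $(-1)$-class to an actual $(-1)$-curve (Lemma~\ref{lemma_D-1}, which also needs irreducibility and $h^1(\O_D)=0$, not just vanishing of $h^2$), the descent of the toric system to the blow-down (Proposition~\ref{prop_eaugm}), and the preservation of full exceptionality under augmentation (Proposition~\ref{prop_augm123}) --- match the paper and are fine in outline.
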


For general rational surfaces the above is not true: see the example in Remark~\ref{remark_Hirz} providing a numerically exceptional collection of maximal length on a Hirzebruch surface, which is not exceptional. We do not know if an arbitrary numerically exceptional collection of maximal length on a del Pezzo surface if full and exceptional.

We would like to thank the anonymous referee who suggested that our results extend to the surfaces over non-algebraically closed fields.
The first author is grateful to Indiana University for their hospitality and inspiring atmosphere.

\section{Preliminaries}

\subsection{Generalities on surfaces}

Throughout this paper we assume that all surfaces are smooth projective connected algebraic surfaces over a field~$\k$ (which is not necessarily algebraically closed). 

Let $X$ be a surface, and $\omega _X=\O_X (K_X)$ its
canonical line bundle.

For a coherent sheaf $\FF$ on $X$ we put 
$$h^i(\FF)=\dim H^i(X,\FF)\quad\text{and}\quad \chi (\FF)=\sum _i(-1)^ih^i(\FF).$$

For a divisor $D$ on $X$ we put 
$$H^i(D)=H^i(X,\O (D)), \quad h^i(D)=\dim H^i(D), \quad\text{and}\quad\chi (D)=\sum_i(-1)^ih^i(D).$$

\subsubsection{Serre duality}

For any coherent sheaves $\EE$ and $\FF$ on $X$ and any $i=0,1,2$ we have
$$\Ext^i(\EE,\FF)\cong \Ext^{2-i}(\FF,\EE\otimes\omega_X)^*$$
(where * denotes the dual vector space). In particular,
for any divisor $D$ on $X$  we have
$$H^i(D)\cong H^{2-i}(K_X-D)^*.$$
Therefore 
$$\chi(D)=\chi(K_X-D).$$

\subsubsection{Riemann-Roch formula}
For any divisor $D$ on $X$ we have
$$\chi (D)=\frac{D\cdot (D-K_X)}{2}+\chi (\O _X).$$

\subsubsection{Hirzebruch surfaces}

The Hirzebruch surface $F_d, d\ge 0$ is defined as the projectivisation of the vector bundle
$\O\oplus \O(d)$ on $\P^1$. It is a rational surface equipped with a ruling $\pi\colon F_d\to\P^1$. Fibers of this ruling are isomorphic to $\P^1$, we denote these fibers by $F$. Clearly, $F^2=0$. The map $\pi$ has a section $B$ with $B^2=-d$, such section is unique for $d>0$. Also $\pi$ has (many) sections $S$ such that $S^2=d$ and $S\cdot B=0$. The Picard group of $F_d$ is a free $\Z$-module with the basis $F,S$ (or $F,B$). One has $S\sim B+dF$. The ruling $\pi\colon F_d\to\P^1$ is unique for $d>0$. The surface $F_0$ is isomorphic to $\P^1\times\P^1$ and has two rulings. The surface $F_1$ is isomorphic to a blow up of $\P^2$ at one point. The canonical class on $F_d$ is represented by $(d-2)F-2S$. The divisor $-K_{F_d}$ is ample if and only if $d\le 1$. Therefore the only Hirzebruch surfaces that are del Pezzo (see Section~\ref{section_delpezzo}) are $F_0$ and~$F_1$.

\subsubsection{Minimal model program for rational surfaces}

Any smooth projective rational surface can be obtained by a sequence of blow-ups from a minimal rational surface. For an algebraically closed field $\k$, any minimal rational surface is either a Hirzebruch surface $F_d$ with $d\ne 1$ or a projective plane $\P^2$. The blow-down of a given rational surface to a minimal one is usually not unique.

\subsubsection{Del Pezzo surfaces}
\label{section_delpezzo}

A del Pezzo surface is a smooth projective surface whose anticanonical class is ample. 
A structure sheaf on a del Pezzo surface is exceptional: one has $h^1(X,\O_X)=h^2(X,\O_X)=0$.
The degree $\deg X$ of a del Pezzo surface $X$ is defined as $K_X^2$, this is a positive integer between $1$ and $9$. It is known (see, for example, Manin's book~\cite{Man})
that any del Pezzo surface over an algebraically closed field is rational.  Any del Pezzo surface over an algebraically closed field with $d=\deg X\ne 8$ is a blow-up of $\P^2$ in $9-d$ generic points. Vise versa, any such blow-up is a del Pezzo surface. For $d=8$ there is also a surface $F_0=\P^1\times \P^1$ which is del Pezzo. 

Suppose $p\colon X\to \P^2$ is a blow-up of $0\le r\le 8$ generic $\k$-rational points $P_1,\ldots,P_r$. Then $X$ is a del Pezzo surface of degree $d=9-r$ (for any field $\k$). Denote by $R_i$ the exceptional divisors of $p$ and by~$H$ the full transform on $X$ of some line on $\P^2$.
Then the Picard group of $X$ admits as a basis $H,R_1,\ldots,R_r$. The intersection form in this basis is given by $H^2=1$,$R_i^2=-1$, $H\cdot R_i=0$, $R_i\cdot R_j=0$ for $i\ne j$.
The canonical class on $X$ is $K_X=-3H+(R_1+\ldots+R_r)$.

\subsubsection{Picard group and $K_0$ group}
Let $K_0(X)$ be the Grothendieck group of a surface~$X$.
This group is equipped with a bilinear form given by the Euler pairing: for coherent sheaves $\EE$ and~$\FF$ one has
$$\chi(\EE,\FF)=\sum_s(-1)^s\dim\Ext^s(\EE,\FF).$$
By Serre duality, the right and the left kernels of $\chi$ are equal, we denote them by $\ker\chi$.

We will need to know that 
\begin{equation}
\label{eq_ranks}
\rank (K_0(X)/\ker\chi)=\rank (\Pic (X)/\equiv) +2,
\end{equation}
where $\equiv$ denotes numerical equivalence. Due to the lack of convenient reference we sketch the proof below.

Denote by $A^{\bul}(X)=\oplus_{k}A^k(X)$ the Chow ring of a surface $X$. Recall that $A^k(X)$ is a group of cycles in $X$ of codimension $k$ modulo rational equivalence, in particular $A^1(X)=\Pic(X)$.
There is an additive map
$K_0(X)\otimes\Q\to A^{\bul}(X)\otimes\Q$,
given by the Chern character. 
Let $N^k(X)$ denote the quotient of $A^k(X)$ modulo numerical equivalence. Since $X$ is a surface, we have
$$N^0(X)=\Z,\qquad N^1(X)=\Pic(X)/\equiv,\qquad N^2(X)=\Z,$$
and $N^k(X)$ vanishes for $k>2$. 
Consider the map
\begin{equation*}
\mathrm{Ch}\colon K_0(X)\otimes\Q\to N^{\bul}(X)\otimes\Q.
\end{equation*}

\begin{lemma}
Let $X$ be a smooth projective surface over a field $\k$. Then the map $\mathrm{Ch}$ induces an isomorphism
\begin{equation}
\label{eq_ch}
(K_0(X)\otimes\Q)/\ker\chi\xra{\sim} N^{\bul}(X)\otimes\Q=\Q\oplus ((\Pic (X)/\equiv)\otimes\Q)\oplus \Q.
\end{equation}
\end{lemma}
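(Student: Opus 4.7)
The plan is to use the Grothendieck--Riemann--Roch isomorphism $\ch\colon K_0(X)\otimes\Q\xra{\sim} A^{\bul}(X)\otimes\Q$ (which holds for any smooth projective variety) and then identify the image of $\ker\chi$ with the subspace of classes numerically equivalent to zero. Since $N^{\bul}(X)\otimes\Q$ is by definition the quotient of $A^{\bul}(X)\otimes\Q$ by numerical equivalence, and for a surface $N^{\bul}(X)$ splits as $\Z\oplus(\Pic(X)/\equiv)\oplus\Z$, this would immediately give the asserted isomorphism.

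The key computation is Hirzebruch--Riemann--Roch, which expresses the Euler pairing as
\begin{equation*}
\chi(\EE,\FF)=\int_X \ch(\EE)^\vee\cdot\ch(\FF)\cdot\mathrm{td}(X),
\end{equation*}
where $(\,\cdot\,)^\vee$ is the involution changing the sign of components of odd codimension. The Todd class $\mathrm{td}(X)$ is a unit in $A^{\bul}(X)\otimes\Q$ because its degree-zero component is $1$, so multiplication by it is an automorphism; the involution $(\,\cdot\,)^\vee$ is obviously one as well. Combining these two remarks with the surjectivity of $\ch$, a class $\alpha\in K_0(X)\otimes\Q$ lies in $\ker\chi$ if and only if
\begin{equation*}
\int_X \ch(\alpha)\cdot\gamma=0 \quad\text{for every } \gamma\in A^{\bul}(X)\otimes\Q.
\end{equation*}

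Decomposing $\ch(\alpha)=a_0+a_1+a_2$ with $a_k\in A^k(X)\otimes\Q$, the previous condition splits by codimension into three independent conditions: for each $k=0,1,2$, the class $a_k$ pairs trivially with every element of $A^{2-k}(X)\otimes\Q$. By the very definition of numerical equivalence, this is the same as saying that each $a_k$ represents $0$ in $N^k(X)\otimes\Q$. Hence $\ch$ sends $\ker\chi$ precisely onto the kernel of the projection $A^{\bul}(X)\otimes\Q\to N^{\bul}(X)\otimes\Q$, inducing the claimed isomorphism \eqref{eq_ch}.

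The only real subtlety is the use of Grothendieck--Riemann--Roch, particularly the surjectivity of the rational Chern character. For a surface one can avoid heavy machinery altogether by using the codimension-of-support filtration on $K_0(X)$, whose associated graded pieces are $\Z$, $\Pic(X)$ and $A^2(X)$; the components $\mathrm{rank}$, $c_1$ and $\mathrm{ch}_2$ of $\ch$ then provide an explicit splitting after tensoring with $\Q$, which is enough for the argument above to go through.
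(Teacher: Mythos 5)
Your proof is correct and follows essentially the same route as the paper: establish surjectivity of the Chern character onto $N^{\bul}(X)\otimes\Q$ and then identify $\ker\chi$ with the classes whose Chern character is numerically trivial, using the Riemann--Roch expression of $\chi(\EE,\FF)$ in terms of $\ch(\EE)$ and $\ch(\FF)$ together with the nondegeneracy of the intersection pairing modulo numerical equivalence. The only cosmetic difference is that you package the key computation via the Todd-class form of Hirzebruch--Riemann--Roch (with $\mathrm{td}(X)$ a unit and $(\,\cdot\,)^\vee$ an involution), whereas the paper writes out the surface Riemann--Roch formula explicitly and checks surjectivity by hand using line bundles and skyscraper sheaves.
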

\begin{proof}
First we check that $\mathrm{Ch}$ is surjective.
Indeed, for a line bundle $\LL$ one has $\mathrm{Ch}([\LL])=(1,[\LL],\frac12\LL\cdot\LL)\in \Q\oplus N^1(X)\oplus \Q$.
Therefore the image of $\mathrm{Ch}$ generates $N^{\bul}(X)\otimes\Q$ modulo $N^2(X)\otimes\Q=\Q$. Also, for a closed point $P\in X$ one has 
$\mathrm{Ch}([\O_P])=(0,0,-\deg (P))\in \Q\oplus N^1(X)\oplus \Q$, it follows that $\mathrm{Ch}$ is surjective. 

Now we check that $\mathrm{Ch}$ induces an isomorphism~(\ref{eq_ch}). Indeed, the kernel of $\mathrm{Ch}$
consists of the classes whose components of the Chern character are all numerically trivial.
By a modification of Riemann-Roch formula, for coherent sheaves $\EE,\FF$ on $X$ one has

{\tiny 
\begin{multline*}
\chi(\EE,\FF)=ef \chi(\O_X)+\frac12(fc_1(\EE)^2+ec_1(\FF)^2-2c_1(\EE)c_1(\FF))-
\frac12K_X\cdot(ec_1(\FF)-fc_1(\EE))-(fc_2(\EE)+ec_2(\FF))=\\
=ef \chi(\O_X)- \ch_1(\EE)\ch_1(\FF) 
-\frac12K_X\cdot(e\ch_1(\FF)-f\ch_1(\EE))-(f\ch_2(\EE)+e\ch_2(\FF)),
\end{multline*}

}
\noindent
where $e=\rank\EE$ and $f=\rank \FF$.
By linearity this formula also holds for arbitrary  classes in $K_0(X)$.
It can be easily seen that $\chi(\EE,\FF)$ is a non-degenerate pairing on $N^{\bul}\otimes\Q$
applied to $\ch(\EE)$ and $\ch(\FF)$.
Hence the kernel of $\mathrm{Ch}$ is exactly $\ker\chi$ and we get isomorphism~(\ref{eq_ch}).
\end{proof}
Equality (\ref{eq_ranks}) immediately follows from the above Lemma.

\subsection{Generalities on exceptional collections}
\label{section_genexcoll}

For a smooth projective variety $X$ over a field~$\k$, we denote by $\D^b(X)=\D^b(\coh(X))$ the bounded derived category of coherent sheaves on $X$. This is a $\k$-linear $\Hom$-finite triangulated category.
Recall (see~\cite{Bo}) that an object~$\EE$ in
$\D^b(X)$ is \emph{exceptional} if
$\Hom  (\EE,\EE[s])=\k$ if $s=0$ and is zero otherwise.
Objects $\EE_1,...,\EE_n$ in
$\D^b(X)$ form an \emph{exceptional collection} if
\begin{enumerate}
\item every  $\EE_i$ is exceptional;

\item $\Hom (\EE_i,\EE_j[s])=0$ for all $s$ if $j<i$.
\end{enumerate}
This collection $(\EE_1,...,\EE_n)$
is \emph{strong} if in addition one has
\begin{enumerate}
\item[(2')]  $\Hom (\EE_i,\EE_j[s])=0$ for $s\ne 0$ and all $i,j$.
\end{enumerate}
An exceptional collection is \emph{full} if 
\begin{enumerate}
\item[(3)]
$\D^b(X)$ is the smallest full strict triangulated subcategory of $\D^b(X)$
which contains $\EE_1,...,\EE_n$.
\end{enumerate}

There is also a weaker notion called \emph{numerical exceptionality}: this is semiorthogonality on the level of
$K_0$ group. 

It is said that an object $\EE$ in
$\D^b(X)$ is \emph{numerically exceptional} if
$$\chi (\EE,\EE)=1.$$
Objects $\EE_1,\ldots,\EE_n$ in
$\D^b(X)$ form a \emph{numerically exceptional collection} if
\begin{enumerate}
\item every  $\EE_i$ is numerically exceptional;

\item $\chi(\EE_i,\EE_j)=0$ for all $j<i$.
\end{enumerate}

A numerically exceptional collection $(\EE_1,\ldots,\EE_n)$ in $\D^b(X)$ is \emph{numerically full} or \emph{of maximal length} if 
\begin{enumerate}
\item[(3)]
$n=\rank (K_0(X)/\ker\chi)=\rank (\Pic X/\equiv)+2$.
\end{enumerate}

Any exceptional (resp., full exceptional) collection is numerically exceptional (resp., numerically exceptional of maximal length).

Note that there is no analog of \emph{strong} exceptionality on numerical level.

Any exceptional collection $(\EE_1,...,\EE_n)$ in $\D^b(X)$ generates  an infinite sequence $\EE_i, i\in\Z$ of objects of $\D^b(X)$ such that 
$\EE_{i+n}=\EE_i\otimes\omega_X^{-1}$. Up to shifts in the derived category, this sequence is a helix of period $n$, as it is defined in \cite{Bo}.
Note that we do not suppose that the collection $(\EE_1,...,\EE_n)$ is full.
By Serre duality, any subsequence $(\EE_{k+1},\ldots,\EE_{k+n})$ of length~$n$ in the above sequence is also an exceptional collection. 
The similar holds for numerically exceptional sequences.

Suppose that the exceptional collection $(\EE_1,...,\EE_n)$ is full. Then (see \cite[Theorem 4.1]{Bo}) the object $\EE_{k+n+1}$ for $k\ge 0$ can be inductively defined (up to a shift in the derived  category) as a right mutation of $\EE_{k+1}$ over the subcategory $\langle\EE_{k+2},\ldots,\EE_{k+n}\rangle$. Objects $\EE_i$ with $i<0$ can be  defined similarly as left mutations. It follows that any exceptional collection of the form $(\EE_{k+1},\ldots,\EE_{k+n})$ is also full.

\subsection{Left-orthogonal divisors}

In this note we study exceptional sequences of line bundles on surfaces. Clearly, a line bundle $\EE$ on a surface $X$ is exceptional if and only if the 
structure sheaf $\O_X$ is exceptional which is equivalent to $h^1(\O_X)=h^2(\O_X)=0$. 
Likewise, a line bundle $\EE$ on $X$ is numerically exceptional if and only if $\chi(\O_X)=1$.
We note here that these conditions are satisfied for any rational surface and for any del Pezzo surface. On the other hand, there are geometrically irrational surfaces whose structure sheaf is exceptional: for example, Enriques surfaces.

Consider a pair of line bundles $(\O_X(D_1),\O_X(D_2))$ on a surface $X$. Clearly, it is an exceptional pair if and only if $\O_X$ is an exceptional sheaf and
$$h^i(D_1-D_2)=0\qquad\text{for}\qquad i=0,1,2.$$
Likewise, numerical exceptionality of the pair $(\O_X(D_1),\O_X(D_2))$  means $\chi(X,\O_X)=1$ and $\chi(D_1-D_2)=0$.

This motivates the following definition given in \cite{HP2}:

\begin{definition}
Let $D$ be a divisor on a surface $X$.
We say that $D$  is \emph{left-orthogonal} if $$H^i(X,\O_X(-D))=0$$ for all $i$.
We say that  $D$ is \emph{strongly left-orthogonal} if $D$ is left-orthogonal and  $$H^i(X,\O_X(D))=0$$ for $i=1,2$.
We say that $D$ is \emph{numerically left-orthogonal} if $\chi(X,\O_X(-D))=0$.
\end{definition}

The next proposition immediately follows from definitions.
\begin{predl}
\label{prop_gotodiff}
A collection of line bundles
$$(\O_X(D_1),\ldots,\O_X(D_n))$$
on a surface is  exceptional (resp., numerically exceptional) if and only if the sheaf $\O_X$ is exceptional (resp., numerically exceptional) and
for any $1\le i<j\le n$ the divisor $D_j-D_i$ is left-orthogonal (resp., numerically left-orthogonal). 
\end{predl}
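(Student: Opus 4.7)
The plan is to verify the equivalence by simply unpacking the definition of an exceptional collection of line bundles in terms of cohomology groups of differences of divisors, since for line bundles the $\Hom$ spaces $\Hom(\O_X(D_i), \O_X(D_j)[s])$ compute to $H^s(X, \O_X(D_j - D_i))$.

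First I would treat the exceptional case. Condition (1) of an exceptional collection says each $\O_X(D_i)$ is exceptional, which means $\Hom(\O_X(D_i), \O_X(D_i)[s]) = H^s(X, \O_X) = \k$ if $s=0$ and $0$ otherwise; this condition does not involve $D_i$ and is literally the condition that $\O_X$ be exceptional. Condition (2) says $\Hom(\O_X(D_i), \O_X(D_j)[s]) = 0$ for all $s$ whenever $j < i$. Since
\[
\Hom(\O_X(D_i), \O_X(D_j)[s]) = H^s(X, \O_X(D_j - D_i)),
\]
this reads $H^s(X, \O_X(D_j - D_i)) = 0$ for all $s$ and all $j<i$. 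Re-indexing (replacing the pair $j<i$ by $i<j$), this becomes $H^s(X, \O_X(-(D_j - D_i))) = 0$ for all $s$ and all $i<j$, which is exactly the condition that $D_j - D_i$ be left-orthogonal for every $1 \le i < j \le n$.

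For the numerically exceptional case the argument is identical with the Euler pairing replacing $\Hom$-spaces: $\chi(\O_X(D_i), \O_X(D_i)) = \chi(\O_X)$, so numerical exceptionality of each $\O_X(D_i)$ amounts to $\chi(\O_X) = 1$, i.e., $\O_X$ being numerically exceptional; and $\chi(\O_X(D_i), \O_X(D_j)) = \chi(\O_X(D_j - D_i))$, so the semiorthogonality condition $\chi = 0$ for $j < i$ translates after re-indexing into $\chi(X, \O_X(-(D_j - D_i))) = 0$ for $i<j$, i.e., numerical left-orthogonality of $D_j - D_i$.

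There is no real obstacle here: the statement is a direct translation of the general definition of (numerical) exceptionality from Section~\ref{section_genexcoll} into the language of divisors via the isomorphism $\HHom(\O_X(D_i), \O_X(D_j)) \cong \O_X(D_j - D_i)$ and the resulting identification of $\Ext$-groups (resp.\ Euler characteristics) with cohomology (resp.\ Euler characteristic) of $\O_X(D_j - D_i)$. The only bookkeeping worth noting is the sign flip when passing from the indexing convention of Definition of left-orthogonality (which uses $-D$) to the indexing convention of the proposition (which uses $D_j - D_i$ with $i < j$).
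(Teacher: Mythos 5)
Your proof is correct and is exactly the intended argument: the paper itself gives no proof beyond the remark that the proposition ``immediately follows from definitions,'' and your unpacking of $\Ext^s(\O_X(D_i),\O_X(D_j))\cong H^s(X,\O_X(D_j-D_i))$ together with the sign/re-indexing bookkeeping is precisely what that remark leaves implicit. Nothing is missing.
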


\subsection{Toric systems and exceptional collections of line bundles on surfaces}

Let 
$$(\O_X(D_1),\ldots,\O_X(D_n))$$
be a collection of line bundles on a surface.
Following ``astoundingly simple'' idea of L.\,Hille and M.\,Perling, we consider the differences between 
$D_i$ and put
\begin{equation}
\label{eq_AAA}
A_i=
\begin{cases}
D_{i+1}-D_i\quad\text{for}\quad 1\le i\le n-1,\\
(D_1-K_X)-D_n=-K_X-(A_1+\ldots+A_{n-1})\quad\text{for}\quad i=n.
\end{cases}
\end{equation}

Extend the collection $(\O_X(D_1),\ldots,\O_X(D_n))$ infinitely in both directions by the rule $D_{i+n}=D_i-K_X$. Then the equality $D_{i+1}-D_i=A_{i\,\mathrm{mod}\,n}$ holds for any $i\in\Z$ (that explains the definition of $A_n$). 
Clearly, the sequence of differences in the collection $(\O_X(D_{k+1}),\ldots,\O_X(D_{k+n}))$ is  $A_{(k+1)\,\mathrm{mod}\,n}, \ldots, A_{(k+n)\,\mathrm{mod}\,n}$.

\begin{remark}
It follows from Serre duality that, for an exceptional (resp., numerically exceptional) collection $$(\O_X(D_1),\ldots,\O_X(D_n)),$$
the collection 
\begin{equation}
\label{eq_helix}
(\O_X(D_{k+1}),\ldots,\O_X(D_{k+n}))
\end{equation}
is also exceptional (resp., numerically exceptional) for any $k$.
\end{remark}

For future use we reformulate Proposition~\ref{prop_gotodiff} in the following way.
\begin{predl}
\label{prop_ec2ts}
A collection of line bundles $(\O_X(D_1),\ldots,\O_X(D_n))$ on a surface with a toric system $A_1,\ldots,A_n$ is exceptional (resp., numerically exceptional) if and only if the sheaf $\O_X$ is exceptional (resp., numerically exceptional) and for any $1\le i\le j\le n-1$ the divisor $A_i+\ldots+A_j$ is left-orthogonal (resp., numerically left-orthogonal). 
\end{predl}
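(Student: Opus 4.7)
The plan is to derive Proposition~\ref{prop_ec2ts} as an immediate reformulation of Proposition~\ref{prop_gotodiff} via the change of variables furnished by the toric system. Since both sides of the claimed equivalence already condition on $\O_X$ being exceptional (resp.\ numerically exceptional), the only thing to verify is that the two families of divisors
\[
\{\,D_j - D_i \mid 1 \le i < j \le n\,\}
\qquad\text{and}\qquad
\{\,A_i + A_{i+1} + \cdots + A_{j'} \mid 1 \le i \le j' \le n-1\,\}
\]
coincide as sets (and in fact term by term under a natural re-indexing).

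First I would use the defining relation $A_k = D_{k+1} - D_k$ for $1 \le k \le n-1$ to telescope: for $1 \le i < j \le n$,
\[
D_j - D_i \;=\; \sum_{k=i}^{j-1}(D_{k+1}-D_k) \;=\; A_i + A_{i+1} + \cdots + A_{j-1}.
\]
Setting $j' = j-1$, the index range $1 \le i < j \le n$ transforms to $1 \le i \le j' \le n-1$, which is exactly the range appearing in the statement. Hence the condition ``$D_j - D_i$ is left-orthogonal for all $1 \le i < j \le n$'' is literally the same as the condition ``$A_i + \cdots + A_{j'}$ is left-orthogonal for all $1 \le i \le j' \le n-1$'', and analogously in the numerical case. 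Applying Proposition~\ref{prop_gotodiff} then yields both directions of the claimed equivalence.

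There is essentially no obstacle: the content lies entirely in bookkeeping of indices. The only minor point worth flagging is that the definition of $A_n = -K_X - (A_1 + \cdots + A_{n-1})$ is not needed in this proposition, because the exceptionality conditions only involve sums $A_i + \cdots + A_{j'}$ with $j' \le n-1$, corresponding to pairs within one fundamental block of the helix. In fact, if one wished to extend the statement to the helix by treating indices modulo $n$ (using the remark preceding the proposition), the definition of $A_n$ would be forced precisely so that cyclically consecutive sums again give differences of the shifted divisors $D_{k+1}, \ldots, D_{k+n}$; but this extension is not required for the proposition as stated.
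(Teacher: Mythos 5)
Your proposal is correct and matches the paper's intent exactly: the paper states this proposition as a direct reformulation of Proposition~\ref{prop_gotodiff} without further proof, and your telescoping identity $D_j-D_i=A_i+\cdots+A_{j-1}$ with the re-indexing $j'=j-1$ is precisely the bookkeeping that justifies it.
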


\begin{definition}
\label{definition_ats}
A sequence of divisors $A_1,\ldots,A_n$ on a surface $X$ is called a \emph{toric system} if $n\ge 3$ and one has
\begin{enumerate}
\item $A_i\cdot A_{i+1}=A_n\cdot A_1=1$ for $i=1,\ldots,n-1$;
\item $A_i\cdot A_j=0$ for $i+1<j$ unless $i=1,j=n$;
\item $A_1+\ldots+A_n\sim -K_X$.
\end{enumerate}
\end{definition}

\begin{remark}
Note that if  $A_1,\ldots,A_n$ is a toric system then 
$A_2,A_3,\ldots,A_n,A_1$ and $A_n,A_{n-1},\ldots,A_1$ are also toric systems.

Informally, one should imagine toric systems as cyclic non-oriented sequences.
\end{remark}

The next proposition is due to L.\,Hille and M.\,Perling, see~\cite[Lemma 3.3 and remarks after it]{HP2}.
\begin{predl}
\label{prop_tsats}
For a numerically exceptional collection 
$(\O_X(D_1),\ldots,\O_X(D_n))$
of line bundles on a surface $X$ with $\chi(\O_X)=1$, the sequence $A_1,\ldots,A_n$ from~(\ref{eq_AAA}) is a toric system if $n\ge 3$.
\end{predl}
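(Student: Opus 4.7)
The plan is to check the three conditions of Definition~\ref{definition_ats} in turn. Condition~(3) holds tautologically: by the very definition of $A_n$ in~(\ref{eq_AAA}), one has $A_1+\ldots+A_n\sim -K_X$.

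For conditions~(1) and~(2), the main input is that every cyclic consecutive partial sum of the $A_i$'s of length $1\le m\le n-1$ is numerically left-orthogonal. Proposition~\ref{prop_ec2ts} supplies this directly for the non-wrapping partial sums $A_i+\ldots+A_j = D_{j+1}-D_i$ with $1\le i\le j\le n-1$. The ``wrap-around'' partial sums are handled either by using that cyclic shifts of a numerically exceptional collection remain numerically exceptional (the remark in Section~\ref{section_genexcoll}), or directly by Serre duality: the identity $\chi(-D)=\chi(K_X+D)$ shows that $D$ is numerically left-orthogonal iff $-K_X-D$ is, and the complement of a non-wrapping cyclic consecutive sum inside $-K_X = A_1+\ldots+A_n$ is a cyclic consecutive sum of complementary length that may cross the cyclic join. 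Combined with the Riemann--Roch formula and $\chi(\O_X)=1$, numerical left-orthogonality of a divisor $D$ becomes the quadratic identity $D\cdot(D+K_X)=-2$.

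The rest is a direct computation. Applying the identity to a single $A_i$ gives $A_i^2+A_i\cdot K_X=-2$. Applying it to a cyclically consecutive pair $A_i+A_{i+1}$ (indices mod $n$) and subtracting the two single-term identities yields $2A_i\cdot A_{i+1}=2$, which is exactly condition~(1), including the wrap-around $A_n\cdot A_1=1$. Condition~(2) then follows by induction on the length $m=k+1\ge 3$ of a cyclic consecutive sum $S=A_i+A_{i+1}+\ldots+A_{i+k}$: expanding $S\cdot(S+K_X)=-2$, substituting the known single-term identities and the consecutive products equal to $1$, and killing all shorter non-consecutive pair products inside $S$ via the inductive hypothesis, leaves the single term $2A_i\cdot A_{i+k}$, which is therefore $0$. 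Letting $m$ range from $3$ to $n-1$ covers all cyclic distances $2\le d\le \lfloor n/2\rfloor$, which is precisely the range required by condition~(2) (the only pair with $i+1<j$ that is \emph{cyclically} adjacent, namely $(1,n)$, is excluded in condition~(2) and is instead covered by condition~(1)).

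No step is a real obstacle; the argument is essentially a combinatorial unwinding of Riemann--Roch, Serre duality and the definition of numerical exceptionality. The only care needed is to reconcile the cyclic structure of the toric system with the linear indexing used in Definition~\ref{definition_ats}, and to notice that the full sum $A_1+\ldots+A_n = -K_X$ is not numerically left-orthogonal --- which is exactly why the inductive range of admissible sum lengths must stop at $n-1$.
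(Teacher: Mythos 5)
Your proposal is correct and follows essentially the same route as the paper: both rest on the numerical left-orthogonality of all consecutive partial sums (with wrap-arounds handled by the helix/cyclic-shift property or Serre duality) combined with the Riemann--Roch identity $D\cdot(D+K_X)=-2$, which is exactly the content of Lemmas~\ref{lemma_minustwo} and~\ref{lemma_twoslo}. The only difference is organizational: for condition~(2) the paper applies the pair identity to $A_i$ against the tail sums $A_{i+1}+\ldots+A_j$ and $A_{i+1}+\ldots+A_{j-1}$ and subtracts, whereas you expand the quadratic identity for the whole window and induct on its length --- the same computation bookkept differently.
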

Below we give the proof for the convenience of the reader. First we state several lemmas.

\begin{lemma}
\label{lemma_minustwo}
Let $D$ be a numerically left-orthogonal divisor on a surface $X$ with $\chi(\O_X)=1$. Then $D^2=\chi(D)-2$ and $D\cdot K_X=-\chi(D)$. 
\end{lemma}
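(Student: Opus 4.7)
\bigskip

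The plan is to combine the Riemann--Roch formula with the hypothesis $\chi(\O_X(-D))=0$ directly. By definition of numerical left-orthogonality we know $\chi(-D)=0$, and the assumption $\chi(\O_X)=1$ lets us apply the Riemann--Roch formula stated above in the form
$$\chi(E)=\tfrac{1}{2}E\cdot(E-K_X)+1$$
for any divisor $E$ on $X$.

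First I would apply Riemann--Roch to $E=-D$ to rewrite the condition $\chi(-D)=0$ as
$$\tfrac{1}{2}\bigl((-D)\cdot(-D-K_X)\bigr)+1 = 0,$$
which simplifies to $D^2+D\cdot K_X=-2$, i.e.\ $D\cdot K_X=-2-D^2$. Next I would apply Riemann--Roch to $E=D$ to get
$$\chi(D)=\tfrac{1}{2}(D^2-D\cdot K_X)+1,$$
and substitute the previous expression for $D\cdot K_X$. This substitution yields $\chi(D)=D^2+2$, hence $D^2=\chi(D)-2$, and feeding this back into $D\cdot K_X=-2-D^2$ gives $D\cdot K_X=-\chi(D)$.

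There is no real obstacle here: the statement is essentially a two-line manipulation of Riemann--Roch, and both identities follow simultaneously from the single linear relation $\chi(-D)=0$ combined with $\chi(\O_X)=1$. The only mild care needed is to not confuse the roles of $D$ and $-D$ — the left-orthogonality hypothesis is on $-D$, while the conclusions are phrased in terms of invariants of $D$ — but Serre duality (which gives $\chi(D)=\chi(K_X-D)$) is not actually needed for this particular lemma, only Riemann--Roch twice.
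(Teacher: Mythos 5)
Your proof is correct and follows essentially the same route as the paper: apply Riemann--Roch to both $D$ and $-D$, use $\chi(\O_X)=1$ and $\chi(-D)=0$, and combine the two resulting linear relations in $D^2$ and $D\cdot K_X$. The only cosmetic difference is that you substitute one equation into the other whereas the paper adds and subtracts them; the content is identical.
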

\begin{proof}
By the Riemann-Roch formula we have
\begin{align*}
\chi(D)&=\chi(\O_X)+\frac{D(D-K_X)}2=1+\frac{D\cdot D-D\cdot K_X}2;\\
0=\chi(-D)&=\chi(\O_X)+\frac{-D(-D-K_X)}2=1+\frac{D\cdot D+D\cdot K_X}2,
\end{align*}
which immediately implies the lemma. 
\end{proof}

\begin{lemma}
\label{lemma_twoslo}
Suppose $D_1,D_2$ and $D_1+D_2$ are numerically left-orthogonal divisors on a surface $X$ with $\chi(\O_X)=1$. Then $\chi(D_1+D_2)=\chi(D_1)+\chi(D_2)$ and $D_1\cdot D_2=1$. 
\end{lemma}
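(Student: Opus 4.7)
The plan is to apply the previous Lemma (\ref{lemma_minustwo}) to each of the three divisors $D_1$, $D_2$, and $D_1+D_2$, and then exploit bilinearity of the intersection pairing.

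First I would read off from Lemma~\ref{lemma_minustwo} the three identities
\[
D_i\cdot K_X = -\chi(D_i)\quad (i=1,2),\qquad (D_1+D_2)\cdot K_X=-\chi(D_1+D_2).
\]
Since $K_X\cdot(-)$ is linear, adding the first two and comparing with the third immediately gives $\chi(D_1+D_2)=\chi(D_1)+\chi(D_2)$, which is the first assertion.

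For the intersection-number assertion, I would write out Riemann--Roch for $D_1+D_2$ and expand:
\[
\chi(D_1+D_2)=1+\frac{(D_1+D_2)^2-(D_1+D_2)\cdot K_X}{2}
=1+\sum_{i=1}^{2}\frac{D_i^2-D_i\cdot K_X}{2}+D_1\cdot D_2.
\]
Using Riemann--Roch in the opposite direction, each summand $\frac{D_i^2-D_i\cdot K_X}{2}$ equals $\chi(D_i)-1$, so the identity becomes
\[
\chi(D_1+D_2)=\chi(D_1)+\chi(D_2)-1+D_1\cdot D_2.
\]
Combining this with the additivity of $\chi$ already established forces $D_1\cdot D_2=1$.

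There is no real obstacle here: the lemma is essentially a bookkeeping exercise that converts the hypothesis $\chi(-D_i)=\chi(-D_1-D_2)=0$ (together with $\chi(\O_X)=1$) into a linear condition on $D_i\cdot K_X$ via Riemann--Roch, and then compares it with the quadratic expansion of $(D_1+D_2)^2$. The only point worth noting is that \textit{both} numerical left-orthogonality hypotheses (for $D_1$, $D_2$ separately and for $D_1+D_2$) are used: the individual ones to compute $D_i\cdot K_X$ via Lemma~\ref{lemma_minustwo}, and the one for the sum to identify the left-hand side in the expansion of Riemann--Roch for $D_1+D_2$.
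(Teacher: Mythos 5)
Your proposal is correct and follows essentially the same route as the paper: the additivity of $\chi$ is obtained exactly as in the paper from $D\cdot K_X=-\chi(D)$ of Lemma~\ref{lemma_minustwo}, and your Riemann--Roch expansion of $\chi(D_1+D_2)$ is just a rearrangement of the paper's comparison $(D_1+D_2)^2=\chi(D_1+D_2)-2=D_1^2+D_2^2+2$. No gaps.
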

\begin{proof}
By Lemma~\ref{lemma_minustwo} we have
$$\chi(D_1+D_2)=-(D_1+D_2)\cdot K_X=-D_1\cdot K_X-D_2\cdot K_X=\chi(D_1)+\chi(D_2),$$
which proves the first statement. Let us prove the second one. 
By Lemma~\ref{lemma_minustwo} we have 
$$(D_1+D_2)^2=\chi(D_1+D_2)-2=\chi(D_1)+\chi(D_2)-2=D_1^2+D_2^2+2,$$
what implies $D_1\cdot D_2=1$.
\end{proof}

\begin{remark}
The two above lemmas hold for $h^0$ instead of $\chi$ if we assume that the involved numerically left-orthogonal divisors are strongly left-orthogonal.
\end{remark}

\begin{proof}[Proof of Proposition~\ref{prop_tsats}]
Let $(\O_X(D_1),\ldots,\O_X(D_n))$  be a numerically  exceptional collection on $X$ and  $A_1,\ldots,A_n$ be the differences as in~(\ref{eq_AAA}).
We need to check properties (1),(2),(3) of Definition~\ref{definition_ats}.
\begin{enumerate}
\item Equality $A_i\cdot A_{i+1}=1$ follows from Lemma~\ref{lemma_twoslo} for $i=1,\ldots,n-2$ because divisors $A_i,A_{i+1},A_i+A_{i+1}$ are numerically left-orthogonal (see Proposition~\ref{prop_ec2ts}).
To check this equality for $i=n-1$, we consider the numerically  exceptional collection~(\ref{eq_helix}): $(\O_X(D_2),\ldots,\O_X(D_n),\O_X(D_{n+1}))$. Its differences are  $A_2,\ldots,A_n,A_1$ therefore $A_{n-1}\cdot A_n=1$. Similarly, the sequence of differences in the numerically exceptional collection $(\O_X(D_3),\ldots,\O_X(D_n),\O_X(D_{n+1}),\O_X(D_{n+2}))$ is 
$A_3,\ldots,A_n,A_1,A_2$, therefore $A_n\cdot A_1=1$.
\item Suppose $j\ne n$. Then divisors $A_i$, $A_{i+1}+\ldots+A_j$ and $A_i+A_{i+1}+\ldots+A_j$ are numerically left-orthogonal (see Proposition~\ref{prop_ec2ts}) and by Lemma~\ref{lemma_twoslo} we have 
$A_i\cdot (A_{i+1}+\ldots+A_j)=1$. Likewise, we check that $A_i\cdot (A_{i+1}+\ldots+A_{j-1})=1$. It follows now that $A_i\cdot A_j=0$.

If $j=n$ then we assume $i\ne 1$ and we can consider the numerically  exceptional collection $(\O_X(D_2),\ldots,\O_X(D_n),\O_X(D_{n+1}))$ of type~(\ref{eq_helix}). Its differences are $A_2,\ldots,A_n,A_1$ therefore the above arguments can be applied.

\item This follows from the definition of $A_n$.
\end{enumerate}
\end{proof}

\begin{definition}
\label{definition_ts}
The sequence of divisors $A_1,\ldots,A_n$ introduced in~(\ref{eq_AAA}) is called a \emph{toric system} of the collection $(\O_X(D_1),\ldots,\O_X(D_n))$.
\end{definition}

Clearly, an exceptional collection of line bundles can be reconstructed from its toric system uniquely up to a simultaneous twist by a line bundle. That is, for any toric system $A_1,\ldots,A_n$ and a divisor $D_1$ one can consider a sequence $D_1,\ldots,D_n$ of divisors defined by $D_{i+1}=D_1+A_1+\ldots+A_{i}$.

\begin{definition}
We say that a toric system is exceptional (full, strong exceptional, ...) if the corresponding collection of line bundles  is exceptional (full, strong exceptional, ...).
\end{definition}

\begin{remark}
\label{remark_exts}
Note that if  $A_1,\ldots,A_n$ is an exceptional toric system then the toric systems 
$A_2,A_3,\ldots,A_n,A_1$ and $A_n,A_{n-1},\ldots,A_1$ are also exceptional.
That corresponds to the fact that for an exceptional collection $(\O_X(D_1),\ldots,\O_X(D_n))$ the collections 
$$(\O_X(D_2),\ldots,\O_X(D_n),\O_X(D_{n+1}))\quad\text{and}\quad 
(\O_X(-D_n),\ldots,\O_X(-D_1))$$ 
are also exceptional.

The same is true for full exceptional and numerically exceptional toric systems.
\end{remark}

In fact, the notion of a numerically exceptional toric system is useless because the converse to Proposition~\ref{prop_tsats} is true:

\begin{predl}
Any toric system $A_1,\ldots,A_n$ on a surface $X$ with $\chi(\O_X)=1$ is numerically exceptional.
\end{predl}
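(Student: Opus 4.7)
The plan is to reduce the claim, via Proposition~\ref{prop_ec2ts}, to showing that every partial sum $D := A_i + A_{i+1} + \ldots + A_j$ with $1 \le i \le j \le n-1$ is numerically left-orthogonal, i.e.\ satisfies $\chi(\O_X(-D)) = 0$. Since $\chi(\O_X) = 1$, Riemann--Roch gives
\[
\chi(-D) = 1 + \tfrac{1}{2}\bigl(D^2 + D \cdot K_X\bigr),
\]
so the task reduces to proving the single numerical identity $D^2 + D \cdot K_X = -2$ for each such partial sum.

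To handle this identity I would exploit axiom~(3) of Definition~\ref{definition_ats}. Writing $-K_X \sim D + D'$, where
\[
D' := A_{j+1} + \ldots + A_n + A_1 + \ldots + A_{i-1}
\]
is the complementary partial sum, one gets $D^2 + D \cdot K_X = D^2 - D \cdot (D + D') = -D \cdot D'$. Thus everything reduces to the purely combinatorial statement $D \cdot D' = 2$, which should be readable off from axioms~(1) and~(2) of the toric system.

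For the computation of $D \cdot D'$, the only nonzero products $A_k \cdot A_l$ with $k \ne l$ are the consecutive ones $A_k \cdot A_{k+1} = 1$ and the wrap-around $A_1 \cdot A_n = 1$. In the expansion of $D \cdot D'$ the relevant pairs $(A_k, A_l)$ have $k \in [i,j]$ and $l \in [1,i-1] \cup [j+1,n]$. A short case check should show that in every situation there are exactly two such contributing pairs: the adjacent pair $(A_j, A_{j+1})$ (well-defined because $j \le n-1$), together with either $(A_{i-1}, A_i)$ when $i \ge 2$, or the wrap-around pair $(A_1, A_n)$ when $i = 1$. Either way, $D \cdot D' = 1 + 1 = 2$.

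The only real thing to watch is the bookkeeping in this last step: one must verify that the wrap-around exception $A_1 \cdot A_n = 1$ contributes exactly when $i = 1$ (and is not double-counted otherwise), and that the hypothesis $j \le n-1$ prevents $A_n$ itself from sitting inside $D$, which would break the clean dichotomy between $i=1$ and $i \ge 2$. This is the main, and essentially only, obstacle; no genuine difficulty is anticipated beyond it.
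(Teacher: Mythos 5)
Your proof is correct and takes essentially the same route as the paper: both boil down to the Riemann--Roch computation $\chi(-D)=1+\tfrac12\bigl(D^2+D\cdot K_X\bigr)$ combined with the relation $A_1+\ldots+A_n\sim -K_X$ and the intersection axioms. The only (cosmetic) difference is that the paper first observes that collapsing the block $A_i+\ldots+A_j$ into a single entry yields again a toric system, thereby reducing to the case of a single $A_k$, whereas you verify $D\cdot D'=2$ for a general partial sum directly; your case analysis (the pair $\{A_j,A_{j+1}\}$ always contributes, and exactly one of $\{A_{i-1},A_i\}$ or the wrap-around $\{A_1,A_n\}$ contributes according as $i\ge 2$ or $i=1$, using $j\le n-1$) is accurate.
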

\begin{proof}
One should check that any divisor $A'=A_k+\ldots+A_l$, where $1\le k\le l\le n-1$, is numerically left-orthogonal. Note that the sequence $A_1,\ldots,A_{k-1},A',A_{l+1},\ldots,A_n$ is also a toric system. Thus, it suffices to consider only the case $k=l$. By Riemann-Roch formula, one has
\begin{multline*}
\chi(-A_k)=1+\frac12(A_k^2+A_k\cdot K_X)=1+\frac12(A_k^2-A_k\cdot \sum_{i=1}^nA_i)=\\
=1+\frac12(A_k^2-A_k^2-A_k\cdot A_{k-1}-A_k\cdot A_{k+1})=1+\frac12(-2)=0.
\end{multline*}
\end{proof}

Therefore, studying numerically exceptional collections of line bundles on rational surfaces is the same as studying toric systems.

\subsection{Toric systems on Hirzebruch surfaces}

Recall that the Picard group of a Hirzebruch surface $F_d$ admits as a basis $F,S$ where $F$ is  a fiber and $S$ is a relatively very ample section. One has $F^2=0, F\cdot S=1, S^2=d$. 
We present a proposition by Hille and Perling~\cite[Proposition 5.2]{HP2}.

\begin{predl}
\label{prop_tsHirzebruch}
On a Hirzebruch surface $F_d$ all toric systems of length four are cyclic shifts of one of the following: 
\begin{gather}
\label{eq_tsHirzebruch1}
F,S+aF,F,S+bF,\quad\text{where}\quad a,b\in\Z, a+b=-d;\\
\label{eq_tsHirzebruch2}
S-\frac d2F,F+a(S-\frac d2F),S-\frac d2F,F-a(S-\frac d2F),\quad\text{where $a\in\Z$ and $d$ even} .
\end{gather}
System~(\ref{eq_tsHirzebruch1}) is exceptional. 
System~(\ref{eq_tsHirzebruch2}) is exceptional only in cases covered by case~(\ref{eq_tsHirzebruch1}): $d=0$ or $d$ even and $a=b=0$.
\end{predl}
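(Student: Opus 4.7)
The plan is to translate the toric-system conditions into linear and quadratic relations in a basis of $\Pic(F_d)$, extract a sharp constraint on each $A_i$, and then carry out a short case analysis. Writing $A_i = x_i S + y_i F$, the intersection form is $(xS+yF)\cdot(x'S+y'F) = d x x' + x y' + x' y$ and $-K_{F_d} = 2S + (2-d)F$. The toric-system axioms give $A_i\cdot A_{i\pm 1} = 1$, $A_1\cdot A_3 = A_2\cdot A_4 = 0$, and $\sum A_i \sim -K_{F_d}$. Multiplying the last relation by $A_i$ yields $A_i^2 = -A_i\cdot K_{F_d} - 2$, and equating with $A_i^2 = d x_i^2 + 2 x_i y_i$ and factoring produces the key identity
\[
(x_i - 1)(d x_i + 2 y_i - 2) = 0, \qquad 1\le i\le 4.
\]
Hence each $A_i$ lies on one of two lines in $\Pic(F_d)\otimes\Q$: $L_1 = \{x = 1\}$ or $L_2 = \{dx + 2y = 2\}$.

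The next step is a case analysis on $k := \#\{i : x_i = 1\}$, noting $\sum x_i = 2$. The case $k=4$ is ruled out at once. For $A_i\in L_2$ one has $y_i = 1 - dx_i/2$, so $A_i\cdot A_{i+1}$ simplifies to $x_i + x_{i+1}$ and $A_1\cdot A_3$ simplifies to $2 x_1$; combining these with the orthogonality relations eliminates $k = 0, 1$ and the adjacent subcase of $k = 2$. The opposite subcase of $k = 2$ yields either form (\ref{eq_tsHirzebruch1}) (when the two $L_2$-entries are both $F$) or form (\ref{eq_tsHirzebruch2}) with $|a|\ge 2$ and $d$ even. The case $k = 3$ is uniquely solvable, forces $d$ even, and yields form (\ref{eq_tsHirzebruch2}) with $a = \pm 1$. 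Up to cyclic rotation (and possibly reversal), each feasible case matches one of the two listed forms.

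For the exceptionality statements, form (\ref{eq_tsHirzebruch1}) is handled by reducing the required vanishings of $H^*(F_d, \O(-D))$ for each partial sum $D$ to elementary computations on $\P^1$ via the Leray spectral sequence for $\pi\colon F_d\to\P^1$ and the projection formula. For form (\ref{eq_tsHirzebruch2}) with $d\ge 1$ and $a\ne 0$, I would exhibit an obstruction at the partial sum $D := A_1 + A_2 + A_3 = (2+a)(S - \tfrac{d}{2}F) + F$: since $K_{F_d} + D = aS + (-1 - \tfrac{ad}{2})F$, the standard formula $h^0(mS + nF) = \sum_{j=0}^{m} \max(0, n + jd + 1)$ for $m\ge 0$ on $F_d$ shows $h^0(K_{F_d} + D) > 0$ whenever $a\ge 1$; by Serre duality $h^2(-D) > 0$, so $D$ is not left-orthogonal. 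The case $a\le -1$ reduces to $a\ge 1$ via the reversal $(A_1,A_2,A_3,A_4)\mapsto (A_4,A_3,A_2,A_1)$, which sends $a$ to $-a$. Thus form (\ref{eq_tsHirzebruch2}) is exceptional only when $a = 0$ (a cyclic shift of form (\ref{eq_tsHirzebruch1})) or $d = 0$ (where the extra symmetry of $F_0$ converts it to form (\ref{eq_tsHirzebruch1})).

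The main obstacle is the bookkeeping in the case analysis: when $d$ is even the two lines $L_1$ and $L_2$ meet at the single class $S + (1 - \tfrac{d}{2})F$, so a given $A_i$ can satisfy both conditions simultaneously; keeping track of these overlaps while partitioning by $k$, and handling the parity of $d$ uniformly (since $L_2$ admits integral solutions with $x$ odd only when $d$ is even), is the delicate part of the argument.
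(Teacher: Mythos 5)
Your proof is correct; note that the paper itself gives no proof of this proposition, since it is quoted from Hille--Perling \cite[Proposition 5.2]{HP2}, so your argument amounts to a valid self-contained reconstruction of that classification. The key identity $(x_i-1)(dx_i+2y_i-2)=0$ does follow from $A_i^2=-K_{F_d}\cdot A_i-2$ (dot $\sum_j A_j\sim -K_{F_d}$ with $A_i$, or apply Lemma~\ref{lemma_minustwo} to each $A_i$); the simplification $A_i\cdot A_j=x_i+x_j$ for two classes on $L_2$ is right; and the case analysis over $k=\#\{i:x_i=1\}$ together with $\sum x_i=2$ and the two orthogonality relations is exhaustive: $k=4,0,1$ and the adjacent subcase of $k=2$ give the contradictions you indicate, the opposite subcase of $k=2$ splits into $t=0$ (form (\ref{eq_tsHirzebruch1})) and $t\ne 0$ (form (\ref{eq_tsHirzebruch2}) with $|a|\ge 2$, forcing $d$ even since $y_1=-d/2$), and $k=3$ yields (\ref{eq_tsHirzebruch2}) with $a=\pm1$. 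One small simplification: the ``reversal'' you invoke to reduce $a\le -1$ to $a\ge 1$ is not needed, because the cyclic shift by two places already sends system (\ref{eq_tsHirzebruch2}) with parameter $a$ to the one with parameter $-a$, and cyclic shifts preserve exceptionality by Remark~\ref{remark_exts}. Your obstruction to exceptionality, $h^2(-(A_1+A_2+A_3))=h^0(K_{F_d}+D)>0$ for $D=(2+a)(S-\tfrac d2F)+F$ and $a\ge 1$, $d\ge 2$, is correct and specializes exactly to the computation the paper records in Remark~\ref{remark_Hirz} for $d=2$, $a=1$; the choice of the length-three partial sum is essential here, since $A_1+A_2$ alone would not detect the failure when $a=\pm1$. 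The exceptionality of (\ref{eq_tsHirzebruch1}) via pushforward along $\pi\colon F_d\to\P^1$ is standard and correct.
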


\begin{remark}
For $d=0$, toric systems~(\ref{eq_tsHirzebruch1}) and~(\ref{eq_tsHirzebruch2}) are the same up to an automorphism of $\Pic (F_0)$ switching generators $F$ and $S$.
\end{remark}

\begin{remark}
\label{remark_Hirz}
For even $d\ge 2$, there exist toric systems of length $4$ on  $F_d$ that are not exceptional. They give rise to numerically exceptional collections of maximal length that are not exceptional. For instance, such is the next collection on $X=F_2$:
$$(\O_X,\O_X(S-F),\O_X(2S-F),\O_X(3S-2F)),$$
corresponding to toric system~(\ref{eq_tsHirzebruch2}) for $a=1$.
Indeed, the divisor $3S-2F$ is not left-orthogonal:
$$h^2(-3S+2F)=h^0(K_{F_2}+3S-2F)=h^0(S-2F)=h^0(B)=1,$$
where $B$ is the negative section of $F_d$.
\end{remark}

\begin{predl}
On a Hirzebruch surface $X$ all toric systems of type~(\ref{eq_tsHirzebruch1}) are full.
\end{predl}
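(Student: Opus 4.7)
The plan is to show that the triangulated subcategory $\TTT \subset \D^b(\coh(X))$ generated by the exceptional collection with toric system~(\ref{eq_tsHirzebruch1}) coincides with all of $\D^b(\coh(X))$. I will do this by exhibiting inside $\TTT$ the classical Beilinson-type collection $(\O_X, \O_X(F), \O_X(S), \O_X(S+F))$ on $F_d$, whose fullness is Orlov's theorem on projective bundles applied to $\pi\colon F_d \to \P^1$.

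Since fullness of a collection is preserved under a simultaneous twist by a line bundle, one may assume $D_1 = 0$; the collection then reads
$$(\O_X,\ \O_X(F),\ \O_X(S+(a+1)F),\ \O_X(S+(a+2)F)).$$
The key tool is the Koszul short exact sequence attached to the two linearly independent global sections of $\O_X(F) = \pi^*\O_{\P^1}(1)$ (whose zero loci are two distinct fibers),
$$0\to \O_X(-F)\to \O_X^{\oplus 2}\to \O_X(F)\to 0.$$
Twisting this sequence by $\O_X(kF)$ and iterating on $|k|$ in both directions, starting from the pair $\O_X, \O_X(F) \in \TTT$, gives $\O_X(kF)\in \TTT$ for every $k\in\Z$.

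Twisting the same Koszul sequence instead by $\O_X(S+mF)$ yields
$$0\to \O_X(S+(m-1)F)\to \O_X(S+mF)^{\oplus 2}\to \O_X(S+(m+1)F)\to 0.$$
Since two consecutive bundles $\O_X(S+(a+1)F)$ and $\O_X(S+(a+2)F)$ already lie in $\TTT$, iterating these triangles in both directions gives $\O_X(S+mF) \in \TTT$ for all $m \in \Z$. In particular $\O_X(S)$ and $\O_X(S+F)$ belong to $\TTT$.

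Hence $\TTT$ contains $\O_X,\O_X(F),\O_X(S),\O_X(S+F)$, and this four-term collection generates $\D^b(\coh(F_d))$ by Orlov's $\P^1$-bundle decomposition $\D^b(F_d)=\langle\pi^*\D^b(\P^1),\ \pi^*\D^b(\P^1)\otimes \O_X(S)\rangle$ combined with the standard collection $(\O_{\P^1},\O_{\P^1}(1))$ on $\P^1$. Therefore $\TTT = \D^b(\coh(X))$, proving the proposition. I do not expect a serious obstacle in this argument: the essential structural observation is that the two ``$S$-type'' bundles appearing in~(\ref{eq_tsHirzebruch1}) always differ precisely by $F$, which is exactly what makes the Koszul induction close up.
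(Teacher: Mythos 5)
Your argument is correct, and it rests on the same key input as the paper's own proof, namely Orlov's projective bundle decomposition $\D^b(F_d)=\langle \pi^*\D^b(\P^1),\ \pi^*\D^b(\P^1)\otimes\O_X(1)\rangle$; the difference is only in how the given collection is matched to that decomposition. The paper takes the shorter route: it notes that $\O_X(1)=\O_X(S+kF)$ is itself a legitimate choice of relative $\O(1)$ (any line bundle of fiber degree one works, since two such choices differ by a pullback from $\P^1$, which does not change the component $\pi^*\D^b(\P^1)\otimes\O_X(1)$), so the pairs $(\O_X,\O_X(F))$ and $(\O_X(S+kF),\O_X(S+(k+1)F))$ generate the two components directly and no further reduction is needed. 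You instead fix the reference collection $(\O_X,\O_X(F),\O_X(S),\O_X(S+F))$ and reduce to it by the Koszul induction: the pulled-back Euler sequence $0\to\O_X(-F)\to\O_X^{\oplus 2}\to\O_X(F)\to 0$ and its twists correctly propagate membership in $\TTT$ along both families $\O_X(kF)$ and $\O_X(S+mF)$ in both directions, and your observation that the two $S$-type bundles in the collection differ exactly by $F$ is precisely what makes the second induction start. This is an extra step the paper avoids, but it is sound and buys you a proof that does not require knowing that Orlov's decomposition is insensitive to the normalization of $\O_X(1)$.
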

\begin{proof}
To check that they are full, note that the corresponding exceptional collection is of the form
$$(\O_X, \O_X(F), \O_X(S+kF),\O_X(S+(k+1)F)), k\in\Z.$$
This collection generates two components $\pi^*\D^b(\P^1)$ and $\pi^*\D^b(\P^1)\otimes\O_X(1)$
of a semi-orthogonal decomposition 
$$\D^b(X)=\langle \pi^*\D^b(\P^1), \pi^*\D^b(\P^1)\otimes\O_{X}(1)\rangle,$$
constructed by D.\,Orlov in~\cite{Or}. Here $\pi\colon F_d\to \P^1$ denotes the projection and 
$\O_{X}(1)=\O_X(S+kF)$ is a relatively very ample line bundle. Hence, the above collection is full.
\end{proof}

\begin{corollary}
\label{corollary_hirzebruch}
Let $X$ be  a Hirzebruch surface $F_d$ where $d=0$ or $d$ is odd. Then all toric systems of length four on $X$ are full and exceptional. 
In other words, all numerically exceptional collections of line bundles of maximal length on $X$ are full and exceptional.
\end{corollary}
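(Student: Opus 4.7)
The plan is to simply combine the classification of toric systems on Hirzebruch surfaces (Proposition~\ref{prop_tsHirzebruch}) with the fullness statement in the preceding proposition, and then translate the result from toric systems back to numerically exceptional collections of line bundles via the correspondence set up in Section 2.

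First, I would take an arbitrary toric system of length four on $X=F_d$. By Proposition~\ref{prop_tsHirzebruch}, up to a cyclic shift it is either of type~(\ref{eq_tsHirzebruch1}) or of type~(\ref{eq_tsHirzebruch2}). The key observation is now a case split on $d$: if $d$ is odd, then type~(\ref{eq_tsHirzebruch2}) is vacuous because it requires $d$ to be even, so every toric system is a cyclic shift of one of type~(\ref{eq_tsHirzebruch1}); if $d=0$, then by the remark immediately following Proposition~\ref{prop_tsHirzebruch} the two types coincide under the automorphism of $\Pic(F_0)$ swapping $F$ and $S$, so again every toric system is (up to relabeling) a cyclic shift of type~(\ref{eq_tsHirzebruch1}).

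Next, I would invoke Proposition~\ref{prop_tsHirzebruch} to conclude that any toric system of type~(\ref{eq_tsHirzebruch1}) is exceptional, and the proposition stated just before this corollary to conclude that it is full. Since exceptionality and fullness are preserved under cyclic shifts (Remark~\ref{remark_exts}), this covers every toric system of length four on $X$.

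Finally, to pass from toric systems back to collections of line bundles, I would use the discussion after Definition~\ref{definition_ts}: a numerically exceptional collection of line bundles of maximal length determines a toric system of length $n=\rank(\Pic X/\equiv)+2=4$, and conversely a toric system determines the collection uniquely up to a twist by a line bundle; twisting preserves exceptionality and fullness. So every numerically exceptional collection of maximal length is an exceptional and full collection. I do not anticipate any real obstacle here: the content is essentially a bookkeeping argument on top of Proposition~\ref{prop_tsHirzebruch} and the Orlov-type semiorthogonal decomposition used in the fullness proposition, with the only subtlety being the remark about $F_0$ which I would state explicitly.
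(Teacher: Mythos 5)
Your proposal is correct and follows essentially the same route as the paper, whose proof is the one-line observation that for $d$ odd or $d=0$ every toric system of length four is of type~(\ref{eq_tsHirzebruch1}) (vacuously for $d$ odd, and via the $F\leftrightarrow S$ automorphism for $d=0$), hence exceptional by Proposition~\ref{prop_tsHirzebruch} and full by the preceding proposition. Your additional bookkeeping about cyclic shifts and the passage between toric systems and line-bundle collections is accurate but is left implicit in the paper.
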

\begin{proof}
Clearly, for such Hirzebruch surfaces all toric systems are of the form~(\ref{eq_tsHirzebruch1}) and thus full and exceptional.
\end{proof}

\subsection{Augmentations}

In~\cite{HP2} L.\,Hille and M.\,Perling  introduced an operation for ``extending'' exceptional collections and toric systems when blowing up points on a surface. 
Here we recall this operation.

Suppose $A'_1,\ldots,A'_n$ is a toric system on a surface $X'$. Let $p\colon X\to X'$ be the blow-up of a point $P\in X'$, let $E$ be the exceptional divisor of $p$.
Denote $A_i=p^*A'_i$. Consider the following systems of divisors on $X$:
\begin{align}
\label{eq_augmentation1}
&E,A_1-E,A_2,\ldots, A_{n-1},A_n-E;\\
\label{eq_augmentation2}
&A_1,\ldots,A_{m-2}, A_{m-1}-E,E,A_m-E,A_{m+1},\ldots,A_n,\quad\text{for}\quad 2\le m\le n;\\
\label{eq_augmentation3}
&A_1-E,A_2,\ldots, A_{n-1},A_n-E,E.
\end{align}
It is easily checked that the above systems are toric systems.
Any of these systems is called an \emph{augmentation} of the system $A'_1,\ldots,A'_n$.
Note that, up to cyclic shift, all three systems  (\ref{eq_augmentation1})-(\ref{eq_augmentation3}) are of the same form.

Below we collect the main properties of augmentations. Some of them are contained in Proposition 5.5 and Section 6 of \cite{HP2}.
\begin{predl}
\label{prop_augm123}
Let $X'$ be a surface with $h^1(X',\O_{X'})=h^2(X',\O_{X'})=0$. Then
\begin{enumerate}
\item A toric system on $X'$ is exceptional if and only if any of its augmentations is exceptional.
\item A toric system on $X'$ is full and exceptional if and only if any of its augmentations is full and exceptional.
\item Suppose $A'_1,\ldots,A'_n$ is a toric system on $X'$. Then its augmentation (\ref{eq_augmentation2})
is strong exceptional if and only if the system $A'_1,\ldots,A'_n$ is strong exceptional and the point~$P$ is not a base point of the linear system $|A'_k+A'_{k+1}+\ldots+A'_l|$ of divisors on $X'$ for any $1\le k\le l\le n-1$, such that $k\le m, m-1\le l$.
\end{enumerate}
\end{predl}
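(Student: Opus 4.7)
The strategy is to reduce everything to cohomology computations on $X'$ by exploiting the standard facts about the blow-up map $p\colon X\to X'$, together with Orlov's semiorthogonal decomposition for blow-ups.

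First I would establish the key cohomological identities that underlie all three parts. Using $Rp_*\O_X = \O_{X'}$ and projection formula one has $H^i(X,\O_X(p^*D))=H^i(X',\O_{X'}(D))$. Using $0\to \O_X\to\O_X(E)\to \O_E(-1)\to 0$ (so $Rp_*\O_X(E)=\O_{X'}$) one similarly gets $H^i(X,\O_X(p^*D+E))=H^i(X',\O_{X'}(D))$. For the minus case, the sequence $0\to \O_X(p^*D-E)\to\O_X(p^*D)\to \O_E\to 0$ (using $p^*D|_E=0$) produces a long exact sequence whose coboundary is the evaluation map $H^0(X',\O_{X'}(D))\to \k$ at~$P$. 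In particular $H^2(p^*D-E)=H^2(D)$, and $H^1(p^*D-E)=0$ iff $H^1(D)=0$ and $P$ is not a base point of $|D|$. Also $\O_X$ is exceptional on $X$ (pushforward), and $E$ itself is left-orthogonal since $h^0(-E)=0$ trivially, $h^2(-E)=h^0(K_X+E)=h^0(p^*K_{X'}+2E)=h^0(K_{X'})=0$, and $\chi(-E)=0$.

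For Part~(1), by Proposition~\ref{prop_ec2ts} I need only check left-orthogonality of every consecutive partial sum of the augmented system. A direct case analysis shows that each such sum has one of the forms $p^*D$, $p^*D\pm E$, or $\pm E$, where $D$ is a consecutive partial sum of the original $A_i$'s. By the identities above, left-orthogonality of each case on $X$ is equivalent to left-orthogonality of the corresponding $D$ on $X'$ (or is automatic for $\pm E$). This argument is symmetric in $X$ and $X'$, proving the ``iff''.

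For Part~(2), I would apply Orlov's decomposition $\D^b(X)=\langle \O_E(-1), p^*\D^b(X')\rangle$. In the augmented collection the two neighbours $\O_X(p^*D_m)$ and $\O_X(p^*D_m+E)$ sit in a short exact sequence $0\to\O_X(p^*D_m)\to\O_X(p^*D_m+E)\to \O_E(-1)\to 0$, so the subcategory they generate contains $\O_E(-1)$; the remaining augmented objects, up to twists by $\O_X(E)$ which are absorbed by $\O_E(-1)$, pull back the $\O_{X'}(D_i)$. Hence fullness on $X'$ gives fullness on $X$. Conversely, applying $Rp_*$ (using $Rp_*\O_X(p^*D)=Rp_*\O_X(p^*D+E)=\O_{X'}(D)$ and $Rp_*\O_E(-1)=0$) shows that fullness on $X$ descends to fullness on $X'$. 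Finally, exceptionality is already handled by Part~(1), so this gives the claim.

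For Part~(3) I would repeat the enumeration of partial sums from Part~(1), now demanding vanishing of $H^1$ and $H^2$ (not all $H^i$) of $\O_X(\text{sum})$ in the augmented system. The $p^*D$, $p^*D+E$, and $\pm E$ cases reduce by the identities of Step~1 to strong left-orthogonality on $X'$. Only the $p^*D-E$ cases impose the additional requirement $H^1(p^*D-E)=0$, which by Step~1 is exactly that $P$ not be a base point of $|D|$ on $X'$. A direct count shows that the partial sums of the augmented system (\ref{eq_augmentation2}) that acquire a $-E$ contribution are precisely those $A'_k+\ldots+A'_l$ with $k\le m$ and $m-1\le l$, matching the statement. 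The main obstacle is the careful bookkeeping of which indices $(k,l)$ of partial sums produce each of the forms $p^*D$, $p^*D\pm E$, or $\pm E$ under the index shift induced by inserting $E$ between positions $m-1$ and $m$ of the augmented system; once this is tabulated, the proof is mechanical.
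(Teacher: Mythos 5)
Your proposal is correct and follows essentially the same route as the paper's proof: the same two short exact sequences on the blow-up give the cohomological identities for the $p^*D$, $p^*D-E$ and $E$ cases, Orlov's blow-up decomposition handles fullness, and the connecting map $H^0(X',\O_{X'}(D))\to\k$ yields the base-point criterion in part~(3). The only (cosmetic) difference is in part~(2), where the paper first cyclically shifts to augmentation~(\ref{eq_augmentation1}) and performs one mutation of the Orlov collection, while you argue generation directly for~(\ref{eq_augmentation2}).
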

\begin{proof}
(1)  By Remark~\ref{remark_exts}, it suffices to consider only the system (\ref{eq_augmentation2}): indeed, systems (\ref{eq_augmentation1})-(\ref{eq_augmentation3}) are cyclic shifts of each other. To check that a toric system is exceptional, we use Proposition~\ref{prop_ec2ts}. 
We consider consecutive sums of divisors in a toric system and check if they are left-orthogonal. For system (\ref{eq_augmentation2}), they are of the form
\begin{equation}
\label{eq_sums}
A_i+\ldots+A_j, A_i+\ldots+A_j-E\quad\text{or}\quad E,
\end{equation}
where $j\le n-1$.
We have 
\begin{equation}
\label{eq_augm1}
H^s(X,-(A_i+\ldots+A_j))\cong H^s(X',-(A'_i+\ldots+A'_j))
\end{equation} for all $s$.
Further, consider the exact sequence of sheaves on $X$
$$0\to \O_X(-(A_i+\ldots+A_j))\to \O_X(E-(A_i+\ldots+A_j))\to \O_E(E-(A_i+\ldots+A_j))\to 0.$$
Since $\O_E(E-(A_i+\ldots+A_j))\cong \O_E(-1)$, its long exact sequence of cohomology gives isomorphisms 
\begin{equation}
\label{eq_augm2}
H^s(X,E-(A_i+\ldots+A_j))\cong
H^s(X,-(A_i+\ldots+A_j))\cong H^s(X',-(A'_i+\ldots+A'_j))
\end{equation} for all $s$.

Since $E$ is a rational $(-1)$-curve, one has $H^s(X,\O_X)\cong H^s(X,\O_E)$ for all $s$. Then the standard long exact sequence of cohomology implies
$H^s(X,\O_X(-E))=0$ for all $s$ and divisor $E$ is left-orthogonal. 
Therefore, isomorphisms (\ref{eq_augm1}) and (\ref{eq_augm2}) imply that all divisors in (\ref{eq_sums}) are left-orthogonal if and only if divisors $A'_i+\ldots+A'_j$ are left-orthogonal for all $1\le i\le j\le n-1$.

(2) By Remark~\ref{remark_exts}, it suffices to consider any one of three augmented systems (\ref{eq_augmentation1})- (\ref{eq_augmentation3}). We consider system (\ref{eq_augmentation1}).

By the definition, a toric system $A'_1,\ldots,A'_n$ of divisors on $X'$ is full and exceptional if and only if the collection of line bundles on $X'$
$$(\O_{X'},\O_{X'}(D'_2),\O_{X'}(D'_3),\ldots,\O_{X'}(D'_n))$$
is full and exceptional, where $D'_{i+1}=A'_1+\ldots+A'_i$ for $1\le i\le n-1$.
By a theorem of D.\,Orlov (see~\cite{Or}), this is equivalent to the collection 
$$(\O_{E}(-1),\O_{X},\O_{X}(D_2),\O_X(D_3),\ldots,\O_X(D_n))$$
on $X$ being full and exceptional, where $D_i=p^*D'_i$. Its mutation gives the exceptional collection 
$$(\O_{X},\O_X(E),\O_{X}(D_2),\O_X(D_3),\ldots,\O_X(D_n)).$$
The toric system of this collection is $E,A_1-E,A_2,\ldots,A_{n-1},A_n-E$. Thus this system is full and exceptional if and only if $A'_1,\ldots,A'_n$ is.

(3) As above, we should check whether the divisors of (\ref{eq_sums}) are strongly left-orthogonal.  
Clearly, 
\begin{equation}
\label{eq_augm1'}
H^s(X,A_i+\ldots+A_j)\cong H^s(X',A'_i+\ldots+A'_j)
\end{equation} for all $s$. 
Further, 
consider the exact sequence 
$$0\to \O_X(A_i+\ldots+A_j-E)\to \O_X(A_i+\ldots+A_j)\to \O_E(A_i+\ldots+A_j)\cong \O_E\to 0.$$
Its sequence of cohomology implies that 
\begin{equation}
\label{eq_augm2'}
H^2(X,A_i+\ldots+A_j-E)\cong H^2(X,A_i+\ldots+A_j)\cong H^2(X',A'_i+\ldots+A'_j).
\end{equation}
Also it has a fragment 
\begin{multline}
\label{eq_augm3'}
0\to H^0(X,\O_X(A_i+\ldots+A_j-E))\to H^0(X,\O_X(A_i+\ldots+A_j))\to H^0(X,\O_E)\to\\ \to H^1(X, \O_X(A_i+\ldots+A_j-E))\to H^1(X, \O_X(A_i+\ldots+A_j))\to 0.
\end{multline}

Suppose first that the system $A'_1,\ldots,A'_n$ is strong exceptional. Then  by part (1)
the augmented system (\ref{eq_augmentation2}) is  exceptional. We aim to check that it is strong exceptional. First, the divisor $E$ is strongly left-orthogonal. Next, by (\ref{eq_augm1'}) divisors $A_i+\ldots+A_j$ are strongly left-orthogonal.
By (\ref{eq_augm2'}) we have $H^2(X,A_i+\ldots+A_j-E)=0$. Note that the map $H^0(X,\O_X(A_i+\ldots+A_j))\to H^0(X,\O_E)$ in (\ref{eq_augm3'}) is the restriction map
$$H^0(X',\O_{X'}(A'_i+\ldots+A'_j))\to H^0(X',\O_P)=\k.$$
Since $H^1(X, \O_X(A_i+\ldots+A_j))=0$, then $H^1(X, \O_X(A_i+\ldots+A_j-E))= 0$  if and only if $P$ is not a base point of the linear system $|A'_i+\ldots+A'_j|$.
Consequently, 
all divisors in (\ref{eq_sums}) are strongly left-orthogonal  if and only if $P$ is not a base point of the linear system $|A'_i+\ldots+A'_j|$
for any $i\le j$ such that the divisor $A_i+\ldots+A_j-E$ appears as a sum of consecutive divisors  in the row 
$$A_1,\ldots,A_{m-2}, A_{m-1}-E,E,A_m-E,A_{m+1},\ldots,A_{n-1}.$$
Obviously, the last condition is equivalent to $i\le m, j\ge m-1, j\le n-1$.

Now suppose that the augmented toric system (\ref{eq_augmentation2}) 
is strong exceptional. Then by part (1)
the system $A'_1, \ldots, A'_n$ is exceptional. For any sum $A'_i+\ldots+A'_j$, we have that either divisor $A_i+\ldots+A_j$ or $A_i+\ldots+A_j-E$ is strongly left-orthogonal.
It follows from equations (\ref{eq_augm1'})-(\ref{eq_augm3'}) that in both cases 
$H^s(X',A'_i+\ldots+A'_j)=0$ for $s=1,2$. Consequently, the toric system $A'_1, \ldots, A'_n$ is strong exceptional.
\end{proof}

\begin{remark}
Note that a toric system is of maximal length if and only if its augmentations are of maximal length. Indeed, for a blow-up $X\to X'$ of one point, one has $K_0(X)=K_0(X')\oplus \Z$ where Euler form $\chi_X$ on $K_0(X)$ is given by the matrix
$$\begin{pmatrix}\chi_{X'}&0\\ *&1\end{pmatrix},$$
hence  $\rank (K_0(X)/\ker\chi_X)=\rank (K_0(X')/\ker\chi_{X'})+1$. 
\end{remark}

Starting from a full exceptional toric system on a Hirzebruch surface or on $\P^2$, one can consecutively blow-up points and augment the toric system at each step. The resulting toric system will be called a \emph{standard augmentation}. In other words,
\begin{definition}
A toric system $A_1,\ldots,A_n$ on a surface $X$ is called a \emph{standard augmentation} if one of the following holds:
\begin{enumerate}
\item $X$ is $\P^2$ or a Hirzebruch surface and $A_1,\ldots,A_n$ is a full exceptional toric system;
\item there exists a blow-down $X\to X'$ of a $-1$-curve on $X$ such that $A_1,\ldots,A_n$ is an augmentation of some standard augmentation on $X'$.
\end{enumerate}
\end{definition}

\begin{remark}
Note that in (1) of the above definition one needs to consider all Hirzebruch surfaces, not only minimal ones. Indeed, on the non-minimal Hirzebruch surface $F_1$ there exist many toric systems that are not augmentations from $\P^2$, see Proposition~\ref{prop_tsHirzebruch}.
\end{remark}

By Proposition~\ref{prop_augm123},  a standard augmentation is full and exceptional.  In particular, it follows that on any rational surface over an algebraically closed field there exists a full exceptional collection of line bundles. Note that the standard augmentation is not necessarily strong exceptional, even if the initial toric system on a minimal surface is strong exceptional, see Proposition~\ref{prop_augm123}.(3).

Note that our terminology is a bit different from the one used in~\cite{HP2}: our standard augmentations are called there admissible standard augmentations.

This procedure allows one to construct many full exceptional collections of line bundles on rational surfaces.  Actually, all known full exceptional collections of line bundles on surfaces are standard augmentations. It is proved in \cite{HP2} that any full strong exceptional collection of line bundles on  a toric surface is (maybe after reordering of bundles inside blocks) a standard augmentation. In Section~\ref{section_main} we prove the similar result for del Pezzo surfaces: any numerically exceptional collection of line bundles on  a del Pezzo surface is a standard augmentation.

\section{Main results}
\label{section_main}

Here we prove the main result of the paper: 

\begin{theorem}
\label{theorem_main}
Any toric system of maximal length on a del Pezzo surface~$X$ is a standard augmentation.
\end{theorem}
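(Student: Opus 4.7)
The plan is to proceed by induction on the Picard rank $\rho$ of $X$. The base case $\rho \le 2$ covers $\P^2$, $F_0$, and $F_1$: on $\P^2$, a direct check shows the only length-three toric systems (up to twist) are the Beilinson collection $H, H, H$; on $F_0$ and $F_1$, Corollary~\ref{corollary_hirzebruch} ensures every length-four toric system is full and exceptional. All such systems are standard augmentations by clause~(1) of the definition.

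For the inductive step, let $X$ be a del Pezzo surface of degree $d \le 7$ and Picard rank $\rho \ge 3$, and let $A_1, \ldots, A_n$ be a toric system of maximal length $n = \rho + 2 = 12 - d$. The key step is to locate an index $m$ for which $A_m$ equals the class of an irreducible $(-1)$-curve $E \subset X$. Granted this, contract $E$ via $p\colon X \to X'$; since contracting a $(-1)$-curve on a del Pezzo surface again yields a del Pezzo surface (one checks ampleness of $-K_{X'}$ via Nakai--Moishezon using $-K_X = p^*(-K_{X'}) - E$), $X'$ is del Pezzo of degree $d+1$. The toric system axioms imply $A_j \cdot E = 0$ for $j \notin \{m-1, m, m+1\}$ and $A_{m\pm 1} \cdot E = 1$, so the classes $A_j$ (for $j$ non-adjacent to $m$), $A_{m-1} + E$, and $A_{m+1} + E$ all have vanishing intersection with $E$, hence descend to classes on $X'$. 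A direct intersection-theoretic check verifies that these descended classes form a toric system $A'_1, \ldots, A'_{n-1}$ of maximal length on $X'$, of which $A_1, \ldots, A_n$ is precisely the augmentation~(\ref{eq_augmentation2}) (up to cyclic shift). The inductive hypothesis together with clause~(2) of the definition of standard augmentation then completes the argument.

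The hardest step, and the main obstacle, is to prove that such an index $m$ always exists. The expected approach is to combine the tight numerical structure of the toric system with the specific geometry of the del Pezzo. From Lemma~\ref{lemma_minustwo} one has $A_i^2 = \chi(A_i) - 2$ and $A_i \cdot K_X = -\chi(A_i)$, and summing the latter gives $\sum_i \chi(A_i) = K_X^2 = d$. Proposition~\ref{prop_ec2ts} guarantees moreover that every consecutive partial sum of the $A_i$ is numerically left-orthogonal, producing a dense web of further intersection constraints on classes built from the $A_i$. On the geometric side, the $(-1)$-curves on $X$ form the classical finite list (in the basis $H, R_1, \ldots, R_r$ with $r = 9-d$: exceptional divisors $R_i$, strict transforms of lines $H - R_i - R_j$, conics $2H - \sum_{k \in I}R_k$ for $|I| = 5$, and so on). Combining these numerical and geometric constraints---together with ampleness of $-K_X$ to exclude spurious configurations---should force at least one $A_i$ into this explicit list. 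I anticipate a case analysis by the degree $d$ to be unavoidable, and the case $d = 7$ to be especially delicate, since there the average of $\chi(A_i)$ exceeds $1$ and a mere counting argument does not suffice to produce a class with $\chi(A_i) = 1$.
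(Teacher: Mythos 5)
Your overall induction scheme (contract a $(-1)$-curve appearing as some $A_m$, descend the toric system, apply Proposition~\ref{prop_eaugm} and the inductive hypothesis) is exactly the skeleton of the paper's proof. But the step you yourself flag as ``the hardest step, and the main obstacle''---producing an index $m$ with $A_m$ the class of a $(-1)$-curve---is left entirely unproved: you describe a hoped-for case analysis by degree and say the constraints ``should force'' some $A_i$ into the list of $(-1)$-classes, but you give no argument, and you correctly observe that naive counting of the $\chi(A_i)$ fails already for $d=7$. This is a genuine gap, and it is precisely where the paper invokes a nontrivial external input: Theorem~\ref{theorem_HP} (Hille--Perling, Vial) states that for any toric system of maximal length on a surface with $\chi(\O_X)=1$ there is a smooth toric surface $Y$ with torus-invariant irreducible divisors $D_1,\ldots,D_n$ satisfying $D_i^2=A_i^2$ for all $i$. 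For $n\ge 5$ the surface $Y$ is non-minimal, hence carries a $(-1)$-curve, which must be torus-invariant (a non-invariant curve moves, so has non-negative square) and therefore equals some $D_i$; this yields $A_i^2=-1$ with no case analysis at all. One then needs Lemma~\ref{lemma_D-1} (essentially Manin's Theorem IV.4.3a) to upgrade the purely numerical datum $A_i^2=-1$, $A_i$ numerically left-orthogonal, to the statement that $A_i$ is linearly equivalent to an actual irreducible smooth rational $(-1)$-curve on the del Pezzo surface---a step your sketch also elides, since you ask directly for $A_m$ to ``equal the class of an irreducible $(-1)$-curve'' without explaining how to get effectivity and irreducibility from the numerics.

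A secondary gap: the paper works over an arbitrary field, and your base case silently assumes that a del Pezzo surface of Picard rank $1$ or $2$ is $\P^2$, $F_0$ or $F_1$. Over a non-algebraically closed field this requires an argument (the paper uses Vial's result that $\Pic X\to\Pic\bar X$ is an isometry, then descends explicit morphisms $\phi_H$, $\phi_F\times\phi_B$, or a contraction of the negative section). If you intend your proof only over $\bar\k=\k$ this is fine, but then you are proving a weaker statement than the paper's.
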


From this we deduce:

\begin{corollary}
\label{corollary_main}
Any numerically exceptional collection of maximal length of line bundles on a del Pezzo surface~$X$ is a standard augmentation. Also, this collection is full and exceptional.
\end{corollary}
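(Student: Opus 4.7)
The plan is to deduce this corollary directly from Theorem~\ref{theorem_main} together with the dictionary relating numerically exceptional collections of line bundles, toric systems, and standard augmentations that was set up in the preliminaries. Given a numerically exceptional collection $(\O_X(D_1),\ldots,\O_X(D_n))$ of line bundles of maximal length on a del Pezzo surface $X$, I would first form the sequence of differences $A_1,\ldots,A_n$ as in~(\ref{eq_AAA}). Because $X$ is del Pezzo we have $\chi(\O_X)=1$, and because the collection is of maximal length we have $n = \rank(\Pic(X)/\!\equiv)+2 \ge 3$ (the minimum, $n=3$, occurring on $\P^2$). Proposition~\ref{prop_tsats} then guarantees that $A_1,\ldots,A_n$ is a toric system on $X$, evidently of maximal length.

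Next I would apply Theorem~\ref{theorem_main}: this toric system is a standard augmentation. Unwinding the definition, there exists a chain of blow-downs $X = X_0 \to X_1 \to \cdots \to X_k$ with $X_k$ either $\P^2$ or a Hirzebruch surface, and a full exceptional toric system on $X_k$, such that the toric system at each stage $X_i$ is an augmentation of the one on $X_{i+1}$. All the surfaces $X_i$ are smooth rational, so $h^1(\O)=h^2(\O)=0$ holds throughout, which is the hypothesis required to apply Proposition~\ref{prop_augm123}. Parts~(1) and~(2) of that proposition, applied inductively along the chain starting from $X_k$, show that the toric system $A_1,\ldots,A_n$ on $X = X_0$ is itself full and exceptional.

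To finish, I recall that an exceptional collection of line bundles is determined by its toric system up to a simultaneous twist by a line bundle, and that the properties of being exceptional, full, and numerically exceptional are invariant under such a twist. Hence the full exceptionality of the toric system $A_1,\ldots,A_n$ transfers to the original collection $(\O_X(D_1),\ldots,\O_X(D_n))$, giving both conclusions of the corollary.

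The substantive work is all contained in Theorem~\ref{theorem_main}; the corollary itself presents no real obstacle beyond verifying that the translation between collections, toric systems, and augmentations is clean and that the inductive hypothesis in Proposition~\ref{prop_augm123} is met at each intermediate surface. The only point requiring a moment's thought is ensuring $n\ge 3$ so that Proposition~\ref{prop_tsats} applies, but this is automatic from the maximal length assumption on a del Pezzo surface.
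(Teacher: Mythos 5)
Your argument is correct and follows essentially the same route as the paper: pass to the toric system via Proposition~\ref{prop_tsats}, invoke Theorem~\ref{theorem_main}, and propagate fullness and exceptionality up the chain of augmentations with Proposition~\ref{prop_augm123}. The only cosmetic difference is that you read full exceptionality of the base system directly off the definition of a standard augmentation, whereas the paper re-verifies it by identifying the minimal surface as $F_0$ or $F_1$ and citing Corollary~\ref{corollary_hirzebruch}; both are fine.
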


The following observation plays an important role in the proof of Theorem~\ref{theorem_main}.

\begin{predl}
\label{prop_eaugm}
Suppose $A_1,\ldots,A_n$ is a toric system on a surface~$X$. Suppose $A_m=E$ is a $(-1)$-curve on $X$ for some $m$, $1\le m\le n$. Let $p\colon X\to X'$ be the blow-down of~$E$. Then the toric system $A_1,\ldots,A_n$ is an augmentation of some toric system $A'_1,\ldots,A'_{n-1}$ on $X'$. 
\end{predl}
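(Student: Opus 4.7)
The approach is to read off the claimed toric system on $X'$ by decomposing each $A_i$ according to the orthogonal splitting $\Pic(X)=p^*\Pic(X')\oplus\Z E$ and matching the result against the augmentation template~(\ref{eq_augmentation2}). Since cyclic shifts of toric systems are toric systems (see the remark after Definition~\ref{definition_ats}) and the three augmentation forms (\ref{eq_augmentation1})--(\ref{eq_augmentation3}) are cyclic shifts of one another, we may cyclically reindex so that $2\le m\le n-1$ and work exclusively with form~(\ref{eq_augmentation2}); the edge cases $m=1$ and $m=n$ then follow automatically, producing augmentations of types~(\ref{eq_augmentation1}) and~(\ref{eq_augmentation3}), respectively.

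The first step is to exploit the fact that every divisor $D$ on $X$ has a unique decomposition $D=p^*D'+aE$ with $a=-D\cdot E$, a consequence of $\Pic(X)=p^*\Pic(X')\oplus\Z E$ together with $p^*D'\cdot E=0$ and $E^2=-1$. Applying this to each $A_i$, the intersection $A_i\cdot E=A_i\cdot A_m$ is dictated by the toric system axioms: it equals $1$ for $i=m\pm1$, equals $-1$ for $i=m$ (since $A_m=E$), and vanishes otherwise. (The restriction $2\le m\le n-1$ is precisely what prevents the wrap-around intersection $A_n\cdot A_1=1$ from interfering here.) Hence there exist divisors $\widehat A_i$ on $X'$ such that $A_i=p^*\widehat A_i$ for $i\notin\{m-1,m,m+1\}$, $A_{m-1}=p^*\widehat A_{m-1}-E$, $A_m=E$, and $A_{m+1}=p^*\widehat A_{m+1}-E$.

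The candidate toric system on $X'$ is then
\[
A'_j=\widehat A_j\ (j\le m-1),\qquad A'_m=\widehat A_{m+1},\qquad A'_j=\widehat A_{j+1}\ (j\ge m+1),
\]
and by construction $(A_1,\ldots,A_n)$ coincides with augmentation~(\ref{eq_augmentation2}) of $(A'_1,\ldots,A'_{n-1})$ at position~$m$. It remains to verify that $(A'_1,\ldots,A'_{n-1})$ satisfies the axioms of Definition~\ref{definition_ats}. The intersection numbers are computed from the projection formula $p^*\alpha\cdot p^*\beta=\alpha\cdot\beta$, the given intersections among the $A_i$, and $E^2=-1$; a representative case is $A'_{m-1}\cdot A'_m=(A_{m-1}+E)\cdot(A_{m+1}+E)=0+1+1-1=1$. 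The anticanonical relation follows from $\sum_i A_i=-K_X$ together with the blow-up formula $K_X=p^*K_{X'}+E$, which give $p^*\sum_j A'_j=\sum_i A_i-A_m+2E=-K_X+E=p^*(-K_{X'})$, and then injectivity of $p^*$ on $\Pic$.

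The only real bookkeeping hurdle is the case-by-case check of the intersection numbers $A'_i\cdot A'_j$ for indices near~$m$ (where the blow-down acts nontrivially); each such case collapses to a short direct calculation from the data above, and no deeper technical obstacle is expected.
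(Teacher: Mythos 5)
Your proposal is correct and follows essentially the same route as the paper: both reduce to the case $2\le m\le n-1$, observe that $A_{m-1}+E$, $A_{m+1}+E$ and the remaining $A_i$ have zero intersection with $E$ and hence descend to $\Pic X'$, check the intersection axioms by direct computation, and verify $\sum A'_i=-K_{X'}$ via $K_X=p^*K_{X'}+E$. The only cosmetic difference is that you phrase the descent through the splitting $\Pic(X)=p^*\Pic(X')\oplus\Z E$ while the paper simply exhibits the pulled-back divisors.
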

\begin{proof}
For the convenience of notation we will assume that $2\le m\le n-1$.
Consider the elements $A_1,\ldots,A_{m-2},A_{m-1}+E,A_{m+1}+E,A_{m+2},\ldots,A_n\in\Pic X$.
Clearly, any divisor in this sequence has zero intersection with $E$. Therefore, any element in this sequence is a pull-back of some element in $\Pic X'$. Let
\begin{align*}
A_i&=p^*A'_i \quad\text{for} \quad 1\le i\le m-2,\\
A_{m-1}+E&=p^*A'_{m-1}, \\
A_{m+1}+E&=p^*A'_{m}, \\
A_j&=p^*A'_{j-1} \quad\text{for} \quad m+2\le j\le n.\\
\end{align*}
It is directly checked that intersections between  $A_i'$ satisfy relations (1) and (2) from Definition~\ref{definition_ats}. 
Also, 
$$\sum_{i=1}^{n-1} p^*A'_i=\sum_{1\le i\le n, i\ne m} A_i+2E=\sum_{i=1}^n A_i+E=-K_X+E=-p^*K_{X'},$$
therefore $\sum_{i=1}^{n-1} A'_i=-K_{X'}$.
Thus $A'_1,\ldots,A'_{n-1}$ is a toric system on $X'$.
\end{proof}

For the proof of the main theorem we need the following lemma, which is essentially Manin's Theorem IV{.}4{.}3a in~\cite{Man}. We give the proof for the convenience of the reader.
\begin{lemma}
\label{lemma_D-1}
Let $D$ be a numerically left-orthogonal divisor on a del Pezzo surface with $D^2=-1$. Then $D$ is linearly equivalent to a $(-1)$-curve, and $D$ is left-orthogonal.
\end{lemma}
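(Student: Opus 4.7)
The plan is to pin down the numerical invariants of $D$ using Lemma~\ref{lemma_minustwo}, then promote $D$ to an effective divisor via Riemann--Roch and Serre duality, and finally exploit ampleness of $-K_X$ to extract both the $(-1)$-curve representative and the vanishing needed for left-orthogonality.

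First, applied to the numerically left-orthogonal divisor $D$, Lemma~\ref{lemma_minustwo} gives $\chi(D) = D^2 + 2 = 1$ and $D \cdot K_X = -\chi(D) = -1$, hence $D \cdot (-K_X) = 1$. Since $-K_X$ is ample, any nonzero effective class $E$ satisfies $E \cdot (-K_X) > 0$; applied to $K_X - D$, one computes $(K_X - D) \cdot (-K_X) = -K_X^2 - 1 < 0$, so $K_X - D$ has no effective representative. Serre duality then gives $h^2(D) = h^0(K_X - D) = 0$, and $\chi(D) = 1$ forces $h^0(D) \ge 1$, so $D$ is represented by some effective divisor $\sum a_i C_i$ with $a_i > 0$ and distinct irreducible $C_i$.

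Next, each component satisfies $C_i \cdot (-K_X) \ge 1$, while $\sum a_i (C_i \cdot (-K_X)) = D \cdot (-K_X) = 1$; hence exactly one term appears, with $a = 1$ and $C \cdot (-K_X) = 1$. Thus $D \sim C$ for an integral curve $C$ with $C^2 = -1$ and $C \cdot K_X = -1$; the adjunction relation $C^2 + C \cdot K_X = -2$ forces arithmetic genus zero, so $C$ is a $(-1)$-curve in the sense of Manin~\cite{Man}.

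It remains to verify left-orthogonality, i.e.\ $H^i(X, \O_X(-D)) = 0$ for $i = 0, 1, 2$. The inequality $-D \cdot (-K_X) = -1 < 0$ rules out effective representatives of $-D$, so $h^0(-D) = 0$. For $h^2(-D) = h^0(K_X + D)$, one computes $(K_X + D) \cdot (-K_X) = -K_X^2 + 1$, which is negative as soon as $K_X^2 \ge 2$. In the borderline case $K_X^2 = 1$, any effective representative of $K_X + D$ would pair to $0$ against the ample class $-K_X$, forcing it to be the zero divisor; this gives $D \sim -K_X$ and $D^2 = K_X^2 = 1$, contradicting $D^2 = -1$. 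Hence $h^2(-D) = 0$, and combining with $\chi(-D) = 0$ yields $h^1(-D) = 0$. The only mildly delicate point is this degree-one borderline case for the $H^2$-vanishing; everything else is a direct application of ampleness of $-K_X$ and works over an arbitrary ground field.
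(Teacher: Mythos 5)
Your proof follows the paper's own argument almost step for step: the same appeal to Lemma~\ref{lemma_minustwo}, the same non-effectivity of $K_X-D$ via ampleness and Serre duality to get $h^0(D)\ge\chi(D)=1$, and the same degree count $\sum k_i(E_i\cdot(-K_X))=1$ to force irreducibility with multiplicity one. The two places you deviate are minor but worth noting. For left-orthogonality you argue directly ($h^0(-D)=0$ by ampleness, $h^2(-D)=h^0(K_X+D)=0$ including the borderline $K_X^2=1$ case, then $\chi(-D)=0$ kills $h^1$); the paper instead first proves $D\cong\P^1$ via the sequence $0\to\O_X(-D)\to\O_X\to\O_D\to 0$ and then quotes the vanishing for $(-1)$-curves from the proof of Proposition~\ref{prop_augm123}. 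Your route is if anything cleaner. The one step where you are too quick is ``$p_a(C)=0$, so $C$ is a $(-1)$-curve'': since the paper explicitly allows non-algebraically-closed $\k$, an integral curve of arithmetic genus zero need not be $\P^1_\k$ (e.g.\ a pair of conjugate lines in $\P^2_{\Q}$ is integral with $p_a=0$), and the lemma is later used to perform a Castelnuovo contraction, which genuinely requires $C\cong\P^1_\k$. The gap is fillable with data you already have: $C\cdot(-K_X)=1$ together with ampleness of $-K_{\bar X}$ forces $C$ to be geometrically integral, so $p_a=0$ gives a smooth conic, and oddness of $\deg\O_C(C)=C^2=-1$ then forces a rational point, whence $C\cong\P^1_\k$. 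The paper sidesteps this by proving $h^1(\O_D)=0$ cohomologically and citing Manin; you should add the sentence above to make your adjunction shortcut valid over an arbitrary field.
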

\begin{proof}
By Lemma~\ref{lemma_minustwo}, we have $K_X\cdot D=-1$, $\chi(D)=1$. First, we check that $D$ is linearly equivalent to an effective curve. Indeed, since $(K_X-D)\cdot(-K_X)=-K_X^2-1<0$ and $-K_X$ is ample, we deduce that $h^0(K_X-D)=0$. 
By Serre duality one has 
$h^2(D)=h^0(K_X-D)=0$. 
Therefore $h^0(D)\ge h^0(D)-h^1(D)+h^2(D)=\chi(D)=1$, so we can assume that $D$ is effective.

Let $D=\sum_i k_iE_i$ where $E_i$ are irreducible curves and $k_i\in \N$. Then 
$1=D\cdot (-K_X)=\sum_i k_i(E_i\cdot(-K_X))\ge \sum_i k_i$ because $-K_X$ is ample. Hence $\sum k_i=1$ and $D$ is irreducible. It remains to demonstrate that $D$ is a smooth rational curve. Exact sequence of sheaves 
$$0\to \O_X(-D)\to \O_X\to \O_D\to 0$$
yields an exact sequence  
$$0=H^1(X,\O_X)\to H^1(X,\O_D)\to H^2(X,\O_X(-D)).$$
By Serre duality, $h^2(-D)=h^0(K_X+D)$. Since $(K_X+D)(-K_X)=-K_X^2+1\le 0$ and $-K_X$ is ample, we get that $K_X+D$ is not effective unless $D=-K_X$. The latter case is impossible because $D^2=-1$ and $K_X^2>0$. Hence $h^2(-D)=h^0(K_X+D)=0$ and  $h^1(D,\O_D)=h^1(X,\O_D)=0$. It follows that $D\cong \P^1$, see, for example,~\cite[Exercise IV.1.8b]{Ha}.  Consequently, $D$ is left-orthogonal (see the proof of Proposition~\ref{prop_augm123}).
\end{proof}

\begin{remark}
In general, numerically left-orthogonal divisors on del Pezzo surfaces need not be left-orthogonal. For an example, take the Hirzebruch surface $F_1$ and the divisor $D=-2S+2F=-2B$ on it. Then $D$ is numerically left-orthogonal:
$$\chi(-D)=1+\frac12(D^2+DK_{F_1})=1+\frac12(-4+2)=0,$$
but clearly  $D$ is not left-orthogonal. 
\end{remark}

We recall the next result. It is due to Hille and Perling~\cite{HP2} in the case of an algebraically closed field $\k$ and to C.\,Vial~\cite{Vi} in the general case.

\begin{theorem}[{\cite[Proposition 2.7 or Theorem 3.5]{HP2}}, {\cite[Theorem 3.5]{Vi}}]
\label{theorem_HP}
Let $X$ be a smooth projective surface with $\chi(\O_X)=1$ and with a toric system $A_1,\ldots,A_n$ of maximal length. Then there exists a smooth projective toric surface $Y$ with toric system formed by torus-invariant irreducible divisors $D_1,\ldots,D_n$ such that $A_i^2=D_i^2$ for all $i$.
\end{theorem}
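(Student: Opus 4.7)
This is a combinatorial statement about the integer sequence $a_i := A_i^2$ extracted from the toric system. A smooth projective toric surface is equivalent to a cyclically ordered sequence of primitive lattice vectors $v_1, \ldots, v_n \in \mathbb{Z}^2$ such that consecutive pairs $(v_i, v_{i+1})$ form a $\mathbb{Z}$-basis of $\mathbb{Z}^2$ and the rays $\mathbb{R}_{\geq 0} v_i$ wind exactly once around the origin; the self-intersection of the torus-invariant boundary divisor $D_i$ is then determined by the relation $v_{i-1} + D_i^2\, v_i + v_{i+1} = 0$. Thus the theorem reduces to producing such vectors $v_i$ with $D_i^2$ replaced by $a_i$.

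The construction I propose extracts the $v_i$ from the lattice of relations among the $A_i$. Consider the map $\pi \colon \mathbb{Z}^n \to \Pic(X)/\equiv$ sending $e_i \mapsto [A_i]$. By maximal length, the image has rank $n - 2$, so $K := \ker(\pi)$ is a rank-$2$ sublattice of $\mathbb{Z}^n$. For any $c = (c_j) \in K$, pairing the identity $\sum_j c_j [A_j] = 0$ with $[A_i]$ and invoking the toric intersection relations yields $c_{i-1} + a_i c_i + c_{i+1} = 0$. Define $v_i \in K^* := \mathrm{Hom}(K, \mathbb{Z}) \cong \mathbb{Z}^2$ by $v_i(c) := c_i$; these satisfy the required recursion automatically, and cyclic closure is tautological since the functionals are indexed by $\{1, \ldots, n\}$.

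It remains to verify that (a) each consecutive pair $(v_i, v_{i+1})$ forms a $\mathbb{Z}$-basis of $K^*$, and (b) the rays wind exactly once around the origin. For (a), the recursion is given by a matrix of determinant $+1$, so $v_i \wedge v_{i+1} \in \wedge^2 K^*$ is independent of $i$; it therefore suffices to verify that $(v_1, v_2)$ is a basis, which reduces to saturation of $K$ in $\mathbb{Z}^n$ and surjectivity of the first-two-coordinate projection $K \to \mathbb{Z}^2$. For (b), expanding $(\sum_i A_i)^2 = K_X^2$ gives $\sum_i a_i = K_X^2 - 2n$; combining this with Noether's formula $K_X^2 + \chi_{\mathrm{top}}(X) = 12$ and the identity $\chi_{\mathrm{top}}(X) = n$ (which holds for the rational surfaces with $\chi(\O_X)=1$ that support a toric system of maximal length) produces $\sum_i a_i = 12 - 3n$, the characteristic sum identity for a smooth complete toric surface with $n$ boundary rays, which pins the winding number to $\pm 1$.

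The principal obstacle is step (a): establishing that $K$ is saturated in $\mathbb{Z}^n$ and that the first two coordinate projections jointly surject onto $\mathbb{Z}^2$. This is not immediate from the toric system axioms alone and probably requires exploiting the full intersection pattern together with the structure of the N\'eron--Severi lattice of $X$. If this direct approach stalls, the fallback is induction on $n$: the sum identity forces some $a_i \leq -1$ for $n \geq 4$, and when $a_i = -1$ one ``blows down'' the $i$-th entry combinatorially by absorbing it into its neighbors (replacing $a_{i-1}, a_{i+1}$ by $a_{i-1} + 1, a_{i+1} + 1$), reducing to a toric system of length $n - 1$ on a simpler surface via Proposition~\ref{prop_eaugm}; the induction terminates at $n = 3$ (only $\mathbb{P}^2$) and $n = 4$ (Hirzebruch surfaces, handled by Proposition~\ref{prop_tsHirzebruch}), where the toric realization $Y$ is manifest.
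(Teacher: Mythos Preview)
The paper does not prove this theorem: it is quoted from Hille--Perling and Vial and used as a black box, so there is no in-paper argument to compare against. I will therefore evaluate your sketch on its own.

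Your primary construction via $K=\ker(\mathbb{Z}^n\to\Pic(X)/\!\equiv)$ is essentially the right one, and you are too pessimistic about step~(a). Saturation is immediate since $\mathbb{Z}^n/K$ injects into the torsion-free group $\Pic(X)/\!\equiv$. For surjectivity of the projection $K\to\mathbb{Z}^{\{i,i+1\}}$, note first that the $A_j$ span $N^1(X)_{\mathbb{Q}}$: the classes $[\O(D_j)]$ form a $\mathbb{Q}$-basis of $K_0(X)_{\mathbb{Q}}/\ker\chi$ because their Gram matrix is unipotent, and under the Chern character this forces the partial sums $A_1+\dots+A_{j}$ to span $N^1_{\mathbb{Q}}$. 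Hence $K$ has rank~$2$. Now the recursion assembles into an integer matrix $T\in\mathrm{GL}_2(\mathbb{Z})$ sending $(c_i,c_{i+1})\mapsto(c_{i+n},c_{i+1+n})$; since $T$ restricts to the identity on the rank-$2$ image of $K$, one gets $T=\id$ over~$\mathbb{Q}$ and hence over~$\mathbb{Z}$. It follows that for \emph{any} $(c_i,c_{i+1})$ the recursively defined $c$ satisfies $(\sum c_jA_j)\cdot A_k=0$ for all~$k$, whence $\sum c_jA_j=0$ in $N^1$ by non-degeneracy, i.e.\ $c\in K$. So $(v_i,v_{i+1})$ is a basis of $K^*$ and step~(a) is complete.

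Step~(b) is where the real gap lies. Your derivation of $\sum a_i=12-3n$ invokes $\chi_{\mathrm{top}}(X)=n$, but the theorem does not assume $X$ rational or even that $\k=\mathbb{C}$, so Noether's formula in the form you use it is not available. What actually pins the winding number is the Hodge index theorem: the intersection form on the span of the $A_i$ in $N^1(X)$ has signature $(1,n-3)$, and one must show that among fans closing up under the given recursion only winding number $\pm1$ yields this signature. Your fallback induction also has two defects. First, the sum identity (even when valid) only gives some $a_i\le -1$, not $a_i=-1$; ruling out the case that all negative $a_i$ are $\le -2$ is a genuine step (again controlled by signature). Second, and more seriously, your appeal to Proposition~\ref{prop_eaugm} is illegitimate here: that proposition requires $A_m$ to be an honest $(-1)$-curve on~$X$, whereas you only have $A_m^2=-1$. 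Upgrading the latter to the former is exactly Lemma~\ref{lemma_D-1}, which needs the del Pezzo hypothesis not present in Theorem~\ref{theorem_HP}. The induction must therefore be carried out purely on the combinatorial sequence $(a_i)$, verifying that the blown-down sequence still satisfies the closing-up and signature constraints; this is what Hille--Perling actually do.
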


Also we recall that the maximal length   of a numerically exceptional collection on $X$ is $\rank(\Pic (X)/\equiv)+2$. For a del Pezzo surface $X$, the intersection form on the Picard group $\Pic (X)$ is non-degenerate, so we need not distinguish between 
$\Pic(X)$ and $\Pic(X)/\equiv$.

Now we are ready for
\begin{proof}[Proof of Theorem~\ref{theorem_main}]
Let $(\O(E_1),\ldots,\O(E_n))$ be a numerically exceptional collection of maximal length of line bundles on a del Pezzo surface $X$. Let $A_1,\ldots,A_n$ be the corresponding toric system.
The proof is by induction on $n$. Suppose $n=3$ or $4$. Then $\rank\Pic (X)=1$ or $2$ respectively. We claim that $X$ is respectively $\P^2$ or a Hirzebruch surface. Of course, it is obvious if
$\k$ is algebraically closed, but some explanation is needed in the case of arbitrary field $\k$.
Indeed, let $\bar\k$ be an algebraic closure of $\k$ and $\bar X=X\times_{\k}{\bar\k}$ be the scalar extension of $X$. By Theorem 3.3 of C.\,Vial~\cite{Vi}, the natural map 
\begin{equation}
\label{eq_picpic}
\Pic X\to \Pic \bar X
\end{equation}
is an isomorphism preserving the intersection form. Therefore $\bar X$ is a del Pezzo surface with $\rank\Pic \bar X=1$ or $2$, hence $\bar X$ is a $\P^2_{\bar\k}$ or a Hirzebruch surface.
Suppose $n=3$, then $\bar X\cong \P^2_{\k}$.
Isomorphism~(\ref{eq_picpic}) implies that there is a divisor $H$ on $X$ such that $H^2=1$ and $h^0(X,\O_X(H))=h^0(\bar X,\O_{\bar X}(\bar H))=3$. Divisor $H$ gives a map $\phi_H\colon X\to \P_{\k}^2$. Since $\overline{\phi_H}$ is an isomorphism,  $\phi_H$ is also an isomorphism. Now suppose $n=4$, then $\bar X\cong (F_{d})_{\bar\k}$ where $d=0$ or $1$. Isomorphism~(\ref{eq_picpic}) implies that there are effective divisors $F$ and $B$ on $X$ such that $F^2=0, B^2=-d, F\cdot B=1$. Suppose $d=0$. Then $h^0(X,\O_X(F))=h^0(\bar X,\O_{\bar X}(\bar F))=2$, and divisor $F$ gives a map $\phi_F\colon X\to \P_{\k}^1$. Similarly there is a map $\phi_B\colon X\to \P^1_{\k}$ and the map $\phi_F\times \phi_B\colon X\to (\P^1_{\k})^2=(F_0)_{\k}$ gives an isomorphism.
 Suppose $d=1$. Then $B$ is a smooth rational $-1$-curve. Indeed, $\bar B\cong \P^1_{\bar\k}$ since $\bar X$ is a Hirzebruch surface $F_1$. Further, $B$ has a 
$\k$-point because $F\cdot B=1$, consequently $B\cong \P^1_{\k}$. Let $p\colon X\to Y$ be the blow-down of $B$, then $Y$ is a smooth del Pezzo surface with $\Pic Y=\Z$. Since 
$(B+F)\cdot B=0$, there is a divisor $H$ on $Y$ such that $p^*H\sim B+F$. One has $h^0(Y,\O_Y(H))=h^0(\bar Y,\O_{\bar Y}(\bar H))=3$, and by the above arguments we deduce that $Y\cong \P^2_{\k}$. It follows that $X$ is a blow-up of $\P^2_{\k}$ at a $\k$-point, that is, $X\cong (F_1)_{\k}$.

It remains to demonstrate that the toric system $A_1,\ldots,A_n$ on $X$ is full and exceptional, what follows from Corollary~\ref{corollary_hirzebruch}.

So we can suppose $n\ge 5$. 
By Theorem~\ref{theorem_HP}, there exists a toric surface $Y$ with torus invariant irreducible  divisors $D_1,\ldots,D_n$ such that $D_i^2=A_i^2$ for any $i$. Since $Y$ is not  a minimal surface, it possesses a $-1$-curve $R$. This curve has to be torus invariant (otherwise it is movable and thus $R^2\ge 0$), hence $R=D_i$ for some~$i$. Therefore, $A_i^2=D_i^2=-1$.
By Lemma~\ref{lemma_D-1} divisor $A_i$ is linearly equivalent to a $(-1)$-curve $E$.
By Proposition~\ref{prop_eaugm}, the toric system $A_1,\ldots,A_n$ is an augmentation of some toric system on the blow-down of~$E$. This blow-down is also a del Pezzo surface by~\cite[Corollary IV{.}2{.}8]{Man} and one can proceed by induction.
\end{proof}

\begin{proof}[{Proof of Corollary~\ref{corollary_main}}]
Let $(\O_X(E_1),\ldots,\O_X(E_n))$ be a numerically exceptional collection on $X$ of maximal length.
By Proposition~\ref{prop_tsats}, it corresponds to a toric system $A_1,\ldots,A_n$ of maximal length. By Theorem~\ref{theorem_main}, this toric system is a standard augmentation. It means that $A_1,\ldots,A_n$ is obtained by a sequence of augmentations from some toric system $A'_1,A'_2,A'_3,A'_4$ on a Hirzebruch surface $X'$ (unless $X=\P^2$, which is a trivial case). Clearly, $X'$ is a del Pezzo surface, hence $X'\cong F_0$ or~$F_1$ (see the proof of Theorem~\ref{theorem_main}). By  Corollary~\ref{corollary_hirzebruch},
the toric system $A'_1,A'_2,A'_3,A'_4$ is full and exceptional. Finally, by Proposition~\ref{prop_augm123} the toric system $A_1,\ldots,A_n$ is also full and exceptional.
\end{proof}

\end{document}